\documentclass[english,12pt]{article} 
\pdfoutput=1
\usepackage{authblk}
\usepackage[top=2cm,bottom=2cm,left=1cm,right=1cm]{geometry}
\usepackage{fontenc}
\usepackage[utf8]{inputenc}
\usepackage{afterpage}
\usepackage[dvipsnames]{xcolor}
\usepackage{newtxtext,commath}
\usepackage{amscd}
\usepackage{amsmath}
\usepackage{amsthm}
\usepackage{newtxmath}
\usepackage{pdfsync}
\usepackage{hyperref}
\usepackage{amsfonts}
\usepackage{color}
\usepackage{enumerate}
\usepackage{graphicx}
\usepackage{amsfonts}
\usepackage{ragged2e}
\usepackage{lipsum}
\usepackage{bbm, fancyhdr}

\newtheorem{theorem}{Theorem}[section]
\newtheorem{corollary}[theorem]{Corollary}

\newtheorem{definition}[theorem]{Definition}
\newtheorem{remark}[theorem]{Remark}
\newtheorem{prop}[theorem]{Proposition}

\numberwithin{equation}{section}

\newcommand{\R}{\mathbb{R}}
\newcommand{\Sp}{\mathbb{S}^N_+}
\newcommand{\Hyp}{\mathbb{H}^N}

\newcommand{\be}{\beta}
\newcommand{\eps}{\varepsilon}

\newcommand{\prt}{\partial}

\newcommand{\dvg}{\text{div}}
\newcommand{\dvol}{dv_g}
\newcommand{\dist}{\text{dist}}
\newcommand{\cut}{\text{cut}}
\newcommand{\arcosh}{\text{arcosh}}
\newcommand{\camp}{\mathcal{X}(M)}
\newcommand{\V}{V_{p,a_1,...,a_n}}

\DeclareMathOperator{\hess}{Hess}

\title{Sharp multipolar $L^p$-Hardy-type inequalities on Riemannian manifolds} 
\date{}
\author[1]{Cristian Ciulică} 
\author[1,2]{Teodor Rugină}

\affil[1]{Faculty of Mathematics and Computer Science,	University of Bucharest, 14 Academiei Street, 010014 Bucharest, Romania.}
\affil[2]{Gheorghe Mihoc-Caius Iacob Institute of Mathematical
	Statistics and Applied Mathematics of the Romanian Academy\\
	050711 Bucharest, Romania.}
\affil[ ]{Emails: cristiciulica@yahoo.com, teorugina@yahoo.com}

\begin{document}
\maketitle \raggedright
\textit{Keywords}: sharp Hardy Inequality; p-Laplacian; multipolar singular potential; Riemannian manifold; curvature.\\
\textit{2020 Mathematics Subject Classification}: 35A23, 35B25, 53C21, 58J60.\\

\begin{abstract}
In this paper we prove sharp multipolar Hardy-type inequalities in the Riemannian $L^p-$setting for $p\geq 2$ using the method of super-solutions and fundamental results from comparison theory on manifolds, thus generalizing the work in \cite{cazacu-zuazua}, \cite{kristaly1} for $p=2$. We emphasize that when we restrict to Cartan-Hadamard manifolds, the inequalities improve in the case $2<p<N$ compared to the case $p=2$ since we obtain positive remainder terms which are controlled by curvature estimates. In the end, we treat the cases of positive and negative constant sectional curvature.    
\end{abstract}

\section{Introduction}
The Hardy inequality is a fundamental result in functional analysis and has undergone extensive study over the years due to its profound applications in mathematical physics, analysis and partial differential equations. This work aims to prove multipolar Hardy-type inequalities in the setting of Riemannian manifolds and their sharp formulations, which strongly depend on the geometric and topological properties of these manifolds.  \\
The classical Hardy inequality in Euclidean space $\R^N$, $N\geq 3$, states that for any $u\in C^\infty_c(\R^N)$:
\begin{equation}   \label{Hardy-clasic}
    \int_{\R^N} \abs{\nabla u}^p dx \geq \left( \frac{N-p}{p} \right)^p \int_{\R^N} \frac{\abs{u}^p}{\abs{x}^p} dx,
\end{equation}
where $1<p<N$ and the constant in the right-hand-side is sharp and not attained in the space $C^\infty_c(\R^N)$. Moreover, no positive remainder terms can be added in the inequality \eqref{Hardy-clasic}. Hardy  first proved in a discrete form in \cite{hardy1}, while continuous analogues and improvements were developed later in \cite{hardy2}. This inequality is classified in the literature as a unipolar inequality due to the singularity of $\frac{1}{\abs{x}^p}$ in $x=0$. Unipolar inequalities in $\R^N$ and various Hardy-type inequalities were intensively studied in the last decades in works such as \cite{barbatis}, \cite{brezis}, \cite{devyver}, \cite{marcus}, \cite{vazquez}.\\
In the context of Riemannian manifolds, the inequality \eqref{Hardy-clasic} is generalized using the Riemannian distance function to a point in the pioneering paper \cite{carron}. Later, the subject of Hardy inequalities on Riemannian manifolds and its different forms got a lot of attention, see for example the papers: \cite{kristaly1}, \cite{kombe1}, \cite{kombe2}, \cite{kristaly3} for functional inequalities and uncertainty priciples in general setting of Riemannian manifolds; \cite{berchio1} and \cite{flynn2} for results on Cartan-Hadamard manifolds; \cite{berchio2} and \cite{flynn1} for inequalities in the hyperbolic space; \cite{kristaly2} for a new technique to prove Hardy-type and many functional inequalities using the so-called Ricatti pairs, a generalisation of the concept of Bessel pairs introduced in \cite{ghoussoub}.\\
It is important to mention that when we pass from the flat case of $\R^N$ to Riemannian manifolds, the presence of curvature generates additional restrictions for the validity of such inequalities, as it is explained in \cite{carron}. Another effect of the curvature is that it can either strengthen or weaken the inequality, through the estimation of its terms. Thus, we may find, for example, that we obtain improved inequalities when the manifold is strongly negatively curved. We shall explore these aspects in the next sections.   \\
Multipolar Hardy inequalities involve a singular potential defined by a locally integrable function with multiple singular points. Such inequalities were studied in the last years in \cite{adimurthi}, \cite{bosi}, \cite{canale}, \cite{cazacu-zuazua}, \cite{felli}, \cite{hoffman} and references within. It is worth highlighting here the work in \cite{bosi} where this type of inequalities were obtained by the method of expansion of the square. Those results were improved later in \cite{cazacu-zuazua}, using the method of super-solutions to derive inequalities involving multipolar potentials of the form $W:= \sum_{1\leq i<j\leq n} \frac{\abs{a_i-a_j}^2}{\abs{x-a_i}^2\abs{x-a_j}^2}$, where $n\geq 2$ and $a_1,...,a_n$ are isolated points in $\R^N$, $N\geq 3$. Their result states that for any $u\in C^\infty_c(\R^N)$: 
\begin{equation}    \label{ec:cazacu-zuazua}
    \int_{\R^N} \abs{\nabla u}^2 dx \geq \frac{(N-2)^2}{n^2} \sum_{1\leq i<j\leq n} \int_{\R^N} \frac{\abs{a_i-a_j}^2}{\abs{x-a_i}^2\abs{x-a_j}^2} \abs{u}^2 dx.
\end{equation}
Moreover, they proved that the inequality is sharp, meaning that the constant $\frac{(N-2)^2}{n^2}$ cannot be improved. This result was later generalized in two directions, which we shall explore briefly in the following lines.
The first direction is the generalization to the $L^p$-setting of \eqref{ec:cazacu-zuazua} in \cite{cazacu-rugina} for the bipolar case (i.e. for two singular points) where the authors proved that for any $N\geq 3$ and $u\in C^\infty_c(\R^N)$: 
\begin{align}    \label{ec:cazacu-rugina}
    \int_{\R^N} \abs{\nabla u}^p dx & \geq \frac{p-1}{4}\left(\frac{N-p}{p-1}\right)^p \int_{\R^N} \frac{\abs{a_1-a_2}^2\abs{x-a}^{p-2}}{\abs{x-a_1}^p\abs{x-a_2}^p} \abs{u}^p dx    \notag\\
    & + \frac{p-2}{2}\Bigg( \frac{N-p}{p-1} \Bigg)^{p-1} \int_{\R^N} \frac{\abs{x-a}^{p-4}}{\abs{x-a_1}^p\abs{x-a_2}^p}\Big[\abs{x-a_1}^2\abs{x-a_2}^2 - \big( (x-a_1)\cdot(x-a_2) \big)^2\Big] \abs{u}^p dx.
\end{align}
Here $a$ stands for the median point of the segment $[a_1,a_2]$, precisely $a=\frac{a_1+a_2}{2}$. For $2\leq p<N$ it is shown that the constant $\frac{p-1}{4}\left(\frac{N-p}{p-1}\right)^p$ is sharp and not achieved in the energy space   $\mathcal{D}^{1,p}(\R^N)=\Big\{u\in\mathcal{D}'(\R^N) \;\Big|\; \int_{\R^N} \abs{\nabla u}^p dx < \infty \Big\}$. We notice that, as an improvement to \eqref{ec:cazacu-zuazua}, inequality \eqref{ec:cazacu-rugina} contains a positive remainder term when $p>2$ and a slightly different structure of the singular potential, which has a degeneracy in $a$ in the first integral when $p>2$ and in the second integral when $p>4$, while for $p\in (2,4)$ we have an extra singularity in $a$ in the second integral. \\
The second generalization of \eqref{ec:cazacu-zuazua} is its extension to Riemannian manifolds, where the curvature plays a significant role. In order to state the result in this curved setting, fix $N\geq 3$, $(M,g)$ a complete Riemannian manifold of dimension $N$, a set of points $\{a_1,...,a_n\}\in M$, $n\geq 2$ and the distance functions $d(x,a_i)=:d_i(x)$, for any $i=1,...,n$. We denote by $\Delta_g$ the Laplace-Beltrami operator on $M$, by $\nabla_g$ the gradient operator, and by $\dvol$ the canonical volume form on $M$. The authors in \cite{kristaly1} proved that for any $u\in C^\infty_c(M)$:
\begin{equation}    \label{ec:kristaly-p=2}
    \int_M \abs{\nabla_g u}^2 \dvol \geq \frac{(N-2)^2}{n^2} \sum_{1\leq i<j\leq n}\int_M \abs{\frac{\nabla_g d_i}{d_i}-\frac{\nabla_g d_j}{d_j}}^2 \abs{u}^2 \dvol + \frac{N-2}{n} \sum_{i=1}^n \int_M \frac{d_i\Delta_g d_i - (N-1)}{d_i^2} \abs{u}^2 \dvol,
\end{equation}
where the constant $\frac{(N-2)^2}{n^2}$ is sharp for $n=2$. It is interesting to remark that they also obtain a remainder term which depends on the curvature, as opposed to \eqref{ec:cazacu-zuazua}. This is due to the presence of $\Delta_g d_i$. This remainder is shown to be positive when $M$ is a Cartan-Hadamard manifold. Properties of Cartan-Hadamard manifolds, which shall be useful in our analysis, will be summarised in Section \ref{preliminaries}. \\
Our aim is to find a generalization of \eqref{ec:cazacu-rugina} to the context of Riemannian manifolds, in the spirit of the generalisation from \eqref{ec:cazacu-zuazua} to \eqref{ec:kristaly-p=2}. We start in Section \ref{preliminaries} by presenting some preliminaries on Riemannian geometry, listing some basic properties and two well-known comparison theorems. In Section \ref{main-results} we state the main results, while Section \ref{proof} will be dedicated to the proofs. We will begin by identifying the structure of singularities when we pass from $p=2$ to any $p$ with $1<p<N$ by employing an adaptation of the technique in \cite{picone} to Riemannian manifolds. Then we continue by analyzing the sharp constants and proposing minimizing sequences in the resulting inequalities, while also finding suitable conditions which guarantee the positivity of the remainder terms. In Section \ref{constant-curvature} we explore these inequalities in the constant curvature setting of the hyperbolic space $\Hyp$ and the upper-hemisphere $\Sp$.

\section{Preliminaries}   \label{preliminaries}
In this section, we recall some well-known concepts and definitions of differential operators on Riemannian manifolds. We also introduce the sectional curvature and the comparison theorems which are fundamental for the proof of the main results. For a more comprehensive understanding, one can check the books \cite{lee} or \cite{petersen}.
\begin{definition}
    Let $M$ be a $N$-differentiable manifold, $N\geq 3$. A Riemannian metric on $M$ is a $2-$tensor field $g$ that is symmetric and positive definite.  
\end{definition}
Thus, a Riemannian metric determines an inner product on the tangent space $T_xM$, for any $x\in M$. Let $TM=\cup_{x\in M} T_xM$ be the tangent bundle of $M$ and $\mathcal{X}(M)$ be the set of $C^\infty(M)$ vector fields on $M$ (i.e. sections in the tangent bundle $TM$).  We denote the norm of a tangent vector $X_x\in T_xM$ by $|X_x|:=g(X_x,X_x)^{\frac{1}{2}}$. If $(U,\varphi)$ is a chart on $M$, with $(x_i)$ local coordinates, we denote by $g_{ij}=g\left(\frac{\prt}{\prt x_i},\frac{\prt}{\prt x_j}\right)$ the local coefficients of $g$ in $U$. 
Recall that the distance function associated to the Riemannian metric $g$ is defined as a function $d: M \times M \to [0, \infty)$ given by
\begin{equation}     \label{ec:defdist}
    d(x,y) = \inf_{\gamma} \left\{ \text{lenght}(\gamma) \;|\; \gamma:[a,b] \rightarrow M,\; \text{piecewise differentiable},\; \gamma(a)=x,\; \gamma(b)=y \right\}.
\end{equation}
For $a\in M$ and $r>0$,  we denote by $B_r(a):=\{x\in M\ | d(x,a)<r\}$ the ball of radius $r$ and centered in $a$. Moreover, for $0<r<R$, let $A_a[r,R]:=\{x\in M \;\|\; r\leq d(x,a)\leq R \}$.  \\
Let $u:M\to \mathbb R$ be a function of class $C^1$ and $du$ the differential of $u$. The gradient of $u$ is the vector field $\nabla_g u$ defined by
\begin{equation}   \label{ec:defgrad}
    g(\nabla_g u, X)= df(X), \;\forall X\in \mathcal{X}(M).
\end{equation}
If the local components of the differential of $u$ are denoted by $u_i=\frac{\partial u}{\partial x_i}$, then the local components of the gradient of $u$ are $(\nabla_g u)^i=g^{ij}u_j$, where $g^{ij}$ are the local coefficients of $g^{-1}=(g_{ij})^{-1}$.

We recall the well known Eikonal equation: for every $x\in M$, one has
\begin{equation}\label{eikonal}
|\nabla_g d(x,\cdot)|=1\ {\rm  a.e. \ on}\ M.
\end{equation}

Denote by $\nabla$ the Levi-Civita connection of $g$. We only use $\nabla_g$ (with $g$ as an index) for the gradient, so there is no danger of confusion with the Levi-Civita connection.

The divergence of a vector field $X\in\camp$ with respect to the Riemannian metric $g$ is defined as 
\begin{equation}
    \dvg(X) = \text{tr}\left(\xi\to\nabla_\xi X\right), \;\; \xi\in\camp.
\end{equation}
More precisely, for $X=X^i\frac{\prt}{\prt x_i}\in\mathcal{X}(M)$, the expression of $\dvg (X)$ in local coordinates is given by the formula
\begin{equation}  \label{ec:defdvgloc}
    \dvg \left( X^i\frac{\prt}{\prt x_i} \right) = \frac{1}{\sqrt{\det g}} \frac{\prt}{\prt x_i} \left( X^i\sqrt{\det g} \right).
\end{equation}
For later use, let us recall the next identity: for any $f\in C^\infty(M)$ and vector fields $X,Y\in\camp$ 
\begin{equation}   \label{ec:dvgformula}
    \dvg(fX)= g(\nabla_g f,X) + f\dvg(X).
\end{equation}
The Hessian operator $\hess u$ is defined as the symmetric 2-tensor 
\begin{equation}   \label{ec:defhess}
    \hess^u(X,Y) = g(\nabla_X \nabla_g u,Y), \;\forall X,Y\in TM.
\end{equation}
 The Laplace-Beltrami operator of $u$ is defined as $\Delta u = \dvg(\nabla_g u)$, which in local coordinates is expressed as
\begin{equation}   \label{ec:deflaplacianloc}
    \Delta_g u = \frac{1}{\sqrt{\det g}} \frac{\prt}{\prt x_i} \left( g^{ij}\sqrt{\det g} \frac{\prt u}{\prt x_j}\right).
\end{equation}
We define the $p-$Laplacian operator for $p\in(1,\infty)$ as $\Delta_p u = \dvg\left(\abs{\nabla_g u}^{p-2}\nabla_g u \right)$.\\ 

Since our results will depend on the sectional curvature of $M$, we need to recall the definition of curvature of a connection $\nabla$, by introducing the $(1,3)-$tensor $\mathcal{R}:\camp\times\camp\times\camp \to \camp$, given by
$$\mathcal{R}(X,Y)Z = \nabla_X\nabla_Y Z - \nabla_Y\nabla_X Z - \nabla_{[X,Y]}Z, \;\;\text{for any}\;\; X,Y,Z\in\camp. $$
The well-known Riemann curvature tensor is defined as $R:\camp\times\camp\times\camp\times\camp \to C^\infty(M)$, 
$$R(X,Y,Z,W)=g(\mathcal{R}(Z,W)Y,X), \;\; \text{for any } X,Y,Z,W\in\camp.$$
Let $x\in M$, $\pi\subset T_xM$ a $2-$dimensional vector subspace and $\{E_1,E_2\}\subset\pi$ an orthonormal basis. The sectional curvature of $\pi$ is $$K_\pi=K_\pi(E_1,E_2)=R(E_1,E_2,E_1,E_2).$$  
We say that $M$ has non-positive (or non-negative sectional) curvature if
\begin{equation}
    K_\pi\leq 0 \;\left( \;\text{or}\;\; K_\pi \geq 0\right), \;\; \text{for any } \pi\subset T_xM.
\end{equation}
Our main tools in dealing with the extra-terms which we obtain in the Hardy inequalities will be some comparison theorems for the Laplacian and Hessian operators. For this, we first introduce a useful function. We follow the reference \cite{lee}. For any $c\in\R$, define the function $s_c:[0,\infty)\to\R$ by
\begin{equation}
s_c(r) = \left\{\begin{array}{l l}
    r, & \text{if}\; c=0 \\
    \frac{1}{\sqrt{c}}\sin{(r\sqrt{c})}, & \text{if}\; c>0 \\
    \frac{1}{\sqrt{-c}}\sinh{(r\sqrt{-c})}, & \text{if}\; c<0
\end{array}\right.
\end{equation} 
It is easy to see that
\begin{equation}
    \frac{s_c'(r)}{s_c(r)} = \left\{\begin{array}{l l}
    \frac{1}{r}, & \text{if}\; c=0 \\
    \sqrt{c}\cot{(r\sqrt{c})}, & \text{if}\; c>0 \\
    \sqrt{-c}\coth{(r\sqrt{-c})}, & \text{if}\; c<0
\end{array}\right.
\end{equation} 
Using these notations, we state now two celebrated curvature comparison results.
\begin{theorem}[Hessian comparison theorem, \cite{lee}]   \label{ec:thmHess}
    Suppose (M,g) is a $N-$dimensional Riemannian manifold, $x_0\in M$, $U$ is a neighbourhood of $x_0$ and $d$ is the distance function on $U$. If all the sectional curvatures of $M$ are bounded above by a constant $c$, then the following inequality holds in $U\setminus\{x_0\}$:
    \begin{equation}
        \hess^d \geq \frac{s_c'(d)}{s_c(d)}\pi_d,   \label{ec:hessiancomparison}
    \end{equation}
    where for any $q\in U\setminus\{x_0\}$, $\pi_d:T_qM \to T_qM$ is the orthogonal projection onto the tangent space of the level set of $d$ (or, equivalently, onto the orthogonal complement of $\prt_d|_q$).
\end{theorem}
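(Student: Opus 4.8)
The plan is to reduce the tensorial inequality to a pointwise estimate along a single minimizing geodesic, and then to compare second variations of arclength against the constant-curvature model via the index form of Jacobi fields. First I would fix $q\in U\setminus\{x_0\}$, put $r=d(q)$ and let $\gamma\colon[0,r]\to M$ be the unit-speed minimizing geodesic from $x_0$ to $q$; such a geodesic is unique and free of conjugate points on $(0,r)$ precisely because $d$ is smooth near $q$, which is where the hypothesis on $U$ enters. The radial field is $\prt_d=\gamma'(r)=\nabla_g d(q)$. Differentiating the eikonal equation \eqref{eikonal} gives $\hess^d(\prt_d,X)=0$ for every $X$, so $\prt_d$ lies in the kernel of $\hess^d$ and it suffices to prove
\[
\hess^d(V,V)\ \geq\ \frac{s_c'(r)}{s_c(r)}\,|V|^2
\qquad\text{for every } V\in T_qM \text{ with } g(V,\prt_d)=0,
\]
since the right-hand side of \eqref{ec:hessiancomparison} is exactly $\tfrac{s_c'(r)}{s_c(r)}$ times this projection and both sides annihilate the radial direction.

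Next I would express the left-hand side through Jacobi fields. Let $J$ be the unique Jacobi field along $\gamma$ with $J(0)=0$ and $J(r)=V$; because $V\perp\prt_d$ and $J(0)=0$, decomposing $J$ into tangential and normal parts forces $J$ to stay orthogonal to $\gamma'$ on all of $[0,r]$. The Hessian of the distance function is the second fundamental form of the geodesic spheres, and the standard second-variation computation identifies it with the index form evaluated on $J$,
\[
\hess^d(V,V)=g\big(\nabla_{\gamma'}J,J\big)\big|_{t=r}
= \int_0^r\Big(|\nabla_{\gamma'}J|^2-g\big(\mathcal R(J,\gamma')\gamma',J\big)\Big)\,dt
=: I_r(J,J),
\]
where the last equality uses the Jacobi equation $\nabla_{\gamma'}^2J+\mathcal R(J,\gamma')\gamma'=0$ together with $J(0)=0$ and integration by parts. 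Establishing this identification cleanly---the link between $\hess^d$ and the index form---is the conceptual heart of the argument.

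From here the comparison is a three-step chain. Since every sectional curvature is bounded above by $c$ and $J\perp\gamma'$, one has $g(\mathcal R(J,\gamma')\gamma',J)\leq c\,|J|^2$, whence $I_r(J,J)\geq Q(J)$, where $Q(X):=\int_0^r(|\nabla_{\gamma'}X|^2-c\,|X|^2)\,dt$ is the index form of the constant-curvature-$c$ model. Introducing the parallel unit field $E$ along $\gamma$ with $E(r)=V/|V|$ and the comparison field $W(t):=|V|\,\tfrac{s_c(t)}{s_c(r)}\,E(t)$, which satisfies $W(0)=0$, $W(r)=V$ and solves the model Jacobi equation $X''+cX=0$, the index lemma applied to $Q$ gives $Q(J)\geq Q(W)$; this minimality requires $s_c>0$ on $(0,r]$, i.e. $r$ below the first model conjugate radius, which is automatic for $c\leq0$ and holds in the admissible range when $c>0$. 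A direct computation using $s_c''+c\,s_c=0$ and $s_c(0)=0$ evaluates the minimum,
\[
Q(W)=\frac{|V|^2}{s_c(r)^2}\int_0^r\big((s_c')^2-c\,s_c^2\big)\,dt
=\frac{|V|^2}{s_c(r)^2}\,\big(s_c'\,s_c\big)\big|_{0}^{r}
=\frac{s_c'(r)}{s_c(r)}\,|V|^2,
\]
and concatenating $\hess^d(V,V)=I_r(J,J)\geq Q(J)\geq Q(W)=\tfrac{s_c'(r)}{s_c(r)}|V|^2$ yields the claim.

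The main obstacle I anticipate is twofold, and both parts are geometric rather than computational: first, rigorously justifying the identity $\hess^d(V,V)=I_r(J,J)$, which rests on the fact that $d$ is smooth and $\gamma$ minimizing up to $q$ (no cut point and no conjugate point before $r$), so that the second-variation formula genuinely computes the Hessian; and second, guaranteeing the positivity $s_c>0$ on $(0,r]$ needed for the index-lemma minimization when $c>0$, which is exactly the statement that $q$ lies before the first conjugate point of the model. Both are supplied by the hypothesis that $d$ is the smooth distance function on the neighbourhood $U$, so the remaining steps reduce to the elementary Jacobi-equation manipulations above.
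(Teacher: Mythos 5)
The paper offers no proof of this theorem: it is quoted from Lee's book \cite{lee}, where the argument runs through the Riccati equation satisfied by the radial Hessian along unit-speed radial geodesics and a Riccati comparison principle. Your proof takes the other classical route, via Jacobi fields and the index form, and it is correct: the reduction to $V\perp\prt_d$ using the eikonal equation and the symmetry of $\hess^d$, the identification $\hess^d(V,V)=g(\nabla_{\gamma'}J,J)\big|_{t=r}=I_r(J,J)$ for the Jacobi field with $J(0)=0$, $J(r)=V$ (which stays normal to $\gamma'$ for exactly the reason you give), the estimate $g(\mathcal R(J,\gamma')\gamma',J)\le c\abs{J}^2$, the index-lemma minimization by $W=\abs{V}\,s_c(t)/s_c(r)\,E(t)$, and the evaluation $Q(W)=\frac{s_c'(r)}{s_c(r)}\abs{V}^2$ using $s_c''+cs_c=0$ are all standard and correctly executed. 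What each approach buys: the Riccati route localizes everything to an ODE along a single geodesic and extends more readily to the distributional formulation the paper invokes in its subsequent remark, while your variational route makes the role of conjugate points and of the minimizing property of $\gamma$ completely transparent. Two caveats deserve to be made explicit rather than left implicit. First, when $c>0$ the conclusion requires the additional hypothesis $d(q)<\pi/\sqrt{c}$ so that $s_c>0$ on $(0,d(q)]$; this restriction appears in Lee's statement but is silently dropped in the paper's, and an upper curvature bound alone does not force minimizing geodesics to be shorter than $\pi/\sqrt{c}$, so ``holds in the admissible range'' should be promoted to a stated assumption. Second, your argument is pointwise on the set where $d$ is smooth; the paper's later remark that the inequality holds ``in the distributional sense'' (needed when $d_i$ is the global distance on a complete manifold, which fails to be smooth on the cut locus) requires an extra device such as Calabi's trick, which neither you nor the paper supplies. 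Within the neighbourhood $U$ on which $d$ is smooth, your proof is complete.
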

\begin{theorem}[Laplacian comparison theorem, \cite{lee}]   \label{ec:thmLapl}
    Suppose (M,g) is a $N-$dimensional Riemannian manifold, $x_0\in M$, $U$ is a neighbourhood of $x_0$ and $d$ is the distance function on $U$. If all the sectional curvatures of $M$ are bounded above by a constant $c$, then the following inequality holds in $U\setminus\{x_0\}$:
    \begin{equation}
        \Delta d \geq (N-1)\frac{s_c'(d)}{s_c(d)},     \label{ec:laplaciancomparison}
    \end{equation}
    where $s_c$ is defined above.
\end{theorem}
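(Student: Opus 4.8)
The plan is to deduce the Laplacian comparison theorem directly from the Hessian comparison theorem (Theorem~\ref{ec:thmHess}) by taking traces, exploiting the fact that the Laplace--Beltrami operator is precisely the trace of the Hessian. First I would recall that for any $u\in C^2(M)$ one has $\Delta u = \dvg(\nabla_g u) = \tr(\hess^u)$, the last equality being immediate from the definitions \eqref{ec:defhess} and the coordinate-free expression of the divergence as $\tr(\xi\mapsto\nabla_\xi\nabla_g u)$. In particular $\Delta d = \tr(\hess^d)$ on $U\setminus\{x_0\}$, where $d$ is smooth (we stay inside the cut locus, as in Theorem~\ref{ec:thmHess}). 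Thus it suffices to trace the tensor inequality \eqref{ec:hessiancomparison}, interpreted pointwise as $\hess^d(v,v)\geq \frac{s_c'(d)}{s_c(d)}\,g(\pi_d v,v)$ for every tangent vector $v$.

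The key computation is then the trace of the projection $\pi_d$. I would fix a point $q\in U\setminus\{x_0\}$ and choose an orthonormal basis $\{e_1,\dots,e_{N-1},e_N\}$ of $T_qM$ with $e_N=\nabla_g d|_q=\prt_d|_q$; by the Eikonal equation \eqref{eikonal} this is a unit vector, and $e_1,\dots,e_{N-1}$ span the tangent space to the level set of $d$. Since $\pi_d$ is the orthogonal projection onto that $(N-1)$-dimensional hyperplane, one has $\pi_d e_i=e_i$ for $i\leq N-1$ and $\pi_d e_N=0$, whence $g(\pi_d e_i,e_i)=1$ for each $i\leq N-1$ and $\tr(\pi_d)=N-1$.

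Next I would observe that the Hessian of $d$ has no radial component: differentiating the Eikonal identity $|\nabla_g d|^2\equiv 1$ along any $X$ gives $g(\nabla_X\nabla_g d,\nabla_g d)=0$, and taking $X=\prt_d$ yields $\hess^d(\prt_d,\prt_d)=0$. Applying the pointwise form of \eqref{ec:hessiancomparison} to each $e_i$ with $i\leq N-1$ and summing then gives
\begin{equation*}
    \Delta d = \tr(\hess^d) = \sum_{i=1}^{N-1}\hess^d(e_i,e_i) \geq \frac{s_c'(d)}{s_c(d)}\sum_{i=1}^{N-1} g(\pi_d e_i,e_i) = (N-1)\frac{s_c'(d)}{s_c(d)},
\end{equation*}
which is exactly \eqref{ec:laplaciancomparison}.

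The only point requiring care is the trace bookkeeping, namely verifying that $\tr(\pi_d)=N-1$ and that the radial contribution $\hess^d(\prt_d,\prt_d)$ vanishes; both rest on the Eikonal equation \eqref{eikonal}. There is no serious analytic obstacle at this stage: the genuinely hard content—bounding the eigenvalues of $\hess^d$ through Jacobi-field comparison along radial geodesics—is already packaged inside Theorem~\ref{ec:thmHess}, so the Laplacian estimate follows from nothing more than this linear-algebra reduction.
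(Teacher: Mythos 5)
Your derivation is correct and is exactly the standard one: the paper states Theorem~\ref{ec:thmLapl} without proof, citing \cite{lee}, where it is likewise obtained by tracing the Hessian comparison, using $\Delta d=\tr(\hess^d)$, $\tr(\pi_d)=N-1$, and the vanishing of $\hess^d(\prt_d,\prt_d)$ forced by the Eikonal equation \eqref{eikonal}. Nothing further is needed.
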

The inequalities above are understood in the distributional sense. 

\begin{remark}
\begin{enumerate}
    \item[1)] Notice that, since $\pi_d$ is an orthogonal projection, we have: 
\begin{equation}   \label{ec:proj1}
    g(\pi_d X,X) = g(\pi_d^2 X,X)= g(\pi_d X,\pi_d X) = \abs{\pi_d X}^2 \geq 0, \;\; \text{for any } X\in\camp.
\end{equation}
   \item [2)] Moreover, we have
   \begin{equation}   \label{ec:proj2}
       \pi_d X = X-\frac{g(X,\nabla_g d)}{g(\nabla_g d,\nabla_g d)}\nabla_g d, \;\; \text{for any } X\in\camp.
   \end{equation}
\end{enumerate}
\end{remark}

In order to find sharp estimates in our inequalities, we will address a special class of manifolds, namely Cartan-Hadamard manifolds.
\begin{definition}
     If $M$ is a complete, simply connected Riemannian manifold with non-positive sectional curvature, then it is called a Cartan–Hadamard manifold. 
\end{definition}
\begin{remark}   \label{ec:rmk1}
If $M$ is a Cartan-Hadamard manifold and $d$ is the distance function to $x_0\in M$, since the sectional curvatures are all bounded from above by $c\leq 0$, then, by Theorems \ref{ec:thmHess} and \ref{ec:thmLapl}, we have the following comparison principles:
\begin{align}
    1)& \hess^d(X,X) \geq 0,\;\; \forall X\in\camp; \\
    2)& d \Delta_g d - (N-1) \geq 0.
\end{align}
\end{remark}

\section{Main results}   \label{main-results}
Let $M$ be a complete, simply connected Riemannian manifold of dimension $N\geq 3$ with Riemannian metric $g$, $n\geq 2$ integer, $a_1,...a_n$ fixed points in $M$. Denote by 
$$d_i= d_i(x) := d(x,a_i)$$
the distance function from the point $x\in M$ to the fixed point $a_i$, for any $i=1,...,n$. The following notations will be consistently used throughout the paper:
\begin{equation}
        v:=\sum_{i=1}^n \frac{\nabla_g d_i}{d_i} \;\;\;\text{and}\;\;\; G_{ij}:=g(\nabla_g d_i, \nabla_g d_j).
    \end{equation}
We introduce the following potential:
\begin{align}   \label{ec:defV}
    &V_{p,a_1,...,a_n} := C_1(n,p) \sum_{1\leq i<j\leq n} \abs{\frac{\nabla_g d_i}{d_i} - \frac{\nabla_g d_j}{d_j}}^2 \abs{v}^{p-2} + C_2(n,p) \Bigg[ \sum_{i=1}^n  \frac{d_i\Delta_g d_i - (N-p+1)}{d_i^2} \abs{v}^{p-2}  \notag\\
        & - (p-2) \sum_{i,j, k =1}^n \frac{G_{ij}G_{ik}}{d_i^2d_jd_k} \abs{v}^{p-4} + (p-2) \sum_{\substack{1\leq k \leq n,\\ 1\leq i<j\leq n}} \frac{\hess^{d_i} (\nabla_g d_k,\nabla_g d_j) + \hess^ {d_j} ( \nabla_g d_k,\nabla_g d_i )}{d_id_jd_k}   \abs{v}^{p-4} \Bigg]
\end{align}
where
\begin{align}   \label{ec:defC}
    C_1(n,p) = \frac{(N-p)^p}{n^p(p-1)^{p-1}} \;\;\; \text{and}\;\;\; C_2(n,p)= \frac{(N-p)^{p-1}}{n^{p-1}(p-1)^{p-1}}.  
\end{align}
We denote by
\begin{equation}
    \widetilde{V}_{p,a_1,...,a_n} := \sum_{1\leq i<j\leq n} \abs{\frac{\nabla_g d_i}{d_i} - \frac{\nabla_g d_j}{d_j}}^2 \abs{v}^{p-2}
\end{equation}
the leading component of the potential $V_{p,a_1,...,a_n}$. Asymptotically,  $$\widetilde{V}_{p,a_1,...,a_n} \sim d_i^{-p} \;\;\;\; \text{when $x$ is close to $a_i$, for all $i=1,...,n$.}$$
It is clear that in the case $p=2$, the potential $\widetilde{V}_{2,a_1,...,a_n}$ coincides with the singular potential from the right hand-side of\eqref{ec:cazacu-zuazua} if we restrict it to $\R^N$, while $V_{2,a_1,...,a_n}$ coincides with the potential from the right hand-side of \eqref{ec:kristaly-p=2}. Nevertheless, for any $1<p<N$, $V_{p,a_1,a_2}$ recovers the bipolar case $n=2$ from \eqref{ec:cazacu-rugina} when restricted to $\R^N$.
\vspace{0.1cm}

Our first result is the following multipolar Hardy-type inequality.
\begin{theorem}   \label{ec:thm1}
    Let $(M^N,g)$ be a complete Riemannian manifold of dimension $N\geq 3$ and $a_1,...,a_n$ points in $M$, where $n\geq 2$. Then the following inequality holds for any $1<p<N$ and any $u\in C^\infty_c(M)$:
    \begin{align}
        \int_M \abs{\nabla_g u}^p \dvol & \geq \int_M \V \abs{u}^p \dvol.
    \end{align} 
\end{theorem}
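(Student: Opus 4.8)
The plan is to use the method of super-solutions together with a Riemannian $L^p$ Picone-type inequality, adapting the technique of \cite{picone}. The starting observation is that the vector field $v=\sum_{i=1}^n\frac{\nabla_g d_i}{d_i}$ is itself a gradient: writing $w:=\prod_{i=1}^n d_i$ and using $\frac{\nabla_g d_i}{d_i}=\nabla_g\log d_i$, one has $v=\nabla_g\log w=\frac{\nabla_g w}{w}$. This suggests choosing the positive super-solution
\[
\phi:=w^{-\beta}=\Big(\prod_{i=1}^n d_i\Big)^{-\beta},\qquad \beta:=\frac{N-p}{n(p-1)},
\]
for which $\nabla_g\phi=-\beta\phi\,v$ and hence $|\nabla_g\phi|^{p-2}\nabla_g\phi=-\beta^{p-1}\phi^{p-1}|v|^{p-2}v$. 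The exponent $\beta$ is forced by the constants: one checks $\beta^{p-1}=C_2(n,p)$ and $(p-1)\beta^{p}=C_1(n,p)$. I would first prove that $\phi$ solves the critical equation $-\Delta_p\phi=\V\,\phi^{p-1}$ and then feed this into the Picone machinery.

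The core computation evaluates $-\Delta_p\phi/\phi^{p-1}$. Applying the product rule \eqref{ec:dvgformula} to $|\nabla_g\phi|^{p-2}\nabla_g\phi=-\beta^{p-1}\phi^{p-1}|v|^{p-2}v$ and using $\nabla_g(\phi^{p-1})=-(p-1)\beta\phi^{p-1}v$, one obtains
\[
\frac{-\Delta_p\phi}{\phi^{p-1}}=C_2(n,p)\,\dvg\big(|v|^{p-2}v\big)-C_1(n,p)\,|v|^{p}.
\]
I would then expand $\dvg(|v|^{p-2}v)=|v|^{p-2}\dvg(v)+(p-2)|v|^{p-4}\hess^{\log w}(v,v)$, again via \eqref{ec:dvgformula}, using that $\tfrac12\nabla_g|v|^2=\nabla_v v$ since $v$ is a gradient field. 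The divergence $\dvg(v)$ is computed termwise from the Eikonal equation \eqref{eikonal} and $\nabla_g(1/d_i)=-d_i^{-2}\nabla_g d_i$, giving $\dvg(v)=\sum_{i=1}^n\frac{d_i\Delta_g d_i-1}{d_i^2}$; and $\hess^{\log w}(v,v)=\sum_{i=1}^n\big(\frac{\hess^{d_i}(v,v)}{d_i}-\frac{g(\nabla_g d_i,v)^2}{d_i^2}\big)$, which upon inserting $v=\sum_k\frac{\nabla_g d_k}{d_k}$ and $g(\nabla_g d_i,\nabla_g d_k)=G_{ik}$ produces exactly the two $(p-2)$-type sums in \eqref{ec:defV}.

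The bookkeeping that matches $-\Delta_p\phi/\phi^{p-1}$ with $\V$ is where the care is needed and is, I expect, the main obstacle. Two identities drive it: the elementary expansion $\sum_{1\le i<j\le n}\big|\frac{\nabla_g d_i}{d_i}-\frac{\nabla_g d_j}{d_j}\big|^2=n\sum_{i=1}^n d_i^{-2}-|v|^2$, which lets the $-C_1(n,p)|v|^p$ term cancel against the leading potential while the leftover $(N-p)\sum_i d_i^{-2}$ merges with $\dvg(v)$ to yield the coefficient $d_i\Delta_g d_i-(N-p+1)$; and the reorganisation of the triple sum $\sum_{i,j,k=1}^n\frac{\hess^{d_i}(\nabla_g d_j,\nabla_g d_k)}{d_id_jd_k}$ into the antisymmetrised $1\le i<j\le n$ form of \eqref{ec:defV}, which uses the symmetry of the Hessian together with the crucial vanishing $\hess^{d_i}(\nabla_g d_i,\cdot)=0$ (from $|\nabla_g d_i|\equiv 1$). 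A secondary technical point is that $\phi$ and the $d_i$ are only Lipschitz away from the poles and across the cut locus, so the identity and the ensuing integration by parts must be read distributionally, handled by excising small balls $B_\eps(a_i)$ and letting $\eps\to0$. Once $-\Delta_p\phi=\V\,\phi^{p-1}$ holds, I would invoke the Riemannian $L^p$ Picone inequality: for $u\in C^\infty_c(M)$ and $\phi>0$ one has the pointwise bound $|\nabla_g u|^p\ge g\big(\nabla_g(|u|^p\phi^{1-p}),\,|\nabla_g\phi|^{p-2}\nabla_g\phi\big)$, and integrating by parts gives
\[
\int_M|\nabla_g u|^p\,\dvol\ge\int_M\frac{-\Delta_p\phi}{\phi^{p-1}}\,|u|^p\,\dvol=\int_M\V\,|u|^p\,\dvol,
\]
which is the claimed inequality.
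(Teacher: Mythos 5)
Your proposal is correct and takes essentially the same route as the paper: the same supersolution $\phi=\prod_{i=1}^n d_i^{\frac{p-N}{n(p-1)}}$, the same identity $-\Delta_{p,g}\phi/\phi^{p-1}=\V$, and the same Picone/supersolution principle (Proposition \ref{prop2}) to conclude. The only cosmetic difference is that you organize the $(p-2)$-terms via $\hess^{\log w}(v,v)$ and $\nabla_v v=\tfrac{1}{2}\nabla_g\abs{v}^2$, whereas the paper differentiates $\abs{v}^2=\sum_{i,j}G_{ij}/(d_id_j)$ termwise; the resulting expressions coincide.
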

It is not certain that $V$ is positive for any number of points $a_i$ and any $1<p<N$. This constrains us to consider further the case of only two points $a_1$ and $a_2$.  The next proposition gives us a better expression of the potential from \eqref{ec:defV} in the bipolar case.
\begin{prop}    \label{ec:prop1}
    Let $a_1,a_2$ be points in $M$. Then
    \begin{align}
        V_{p,a_1,a_2} & = C_1(2,p) \abs{\frac{\nabla_g d_1}{d_1} - \frac{\nabla_g d_2}{d_2}}^2 \abs{v}^{p-2} + C_2(2,p) \Bigg[ \sum_{i=1}^2 \frac{d_i\Delta_g d_i - (N-1)}{d_i^2} \abs{v}^{p-2} \notag\\
    & + 2(p-2) \frac{1-G_{12}^2}{d_1^2d_2^2} \abs{v}^{p-4} + (p-2) \left( \frac{\hess^{d_1} (\nabla_g d_2,\nabla_g d_2)}{d_1d_2^2} + \frac{\hess^ {d_2} (\nabla_g d_1,\nabla_g d_1)}{d_1^2d_2} \right) \abs{v}^{p-4} \Bigg]
    \end{align}
\end{prop}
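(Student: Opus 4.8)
The proof is a direct algebraic computation: I would substitute $n=2$ into the definition \eqref{ec:defV} and simplify each of its four groups of terms using three elementary facts. First, the Eikonal equation \eqref{eikonal} gives $G_{ii}=g(\nabla_g d_i,\nabla_g d_i)=1$. Second, differentiating $\abs{\nabla_g d_i}^2\equiv 1$ along an arbitrary $X\in\camp$ yields $g(\nabla_X\nabla_g d_i,\nabla_g d_i)=0$, i.e. the orthogonality relation $\hess^{d_i}(X,\nabla_g d_i)=0$, and by symmetry of the Hessian also $\hess^{d_i}(\nabla_g d_i,X)=0$. Third, expanding $g(v,v)$ for $v=\frac{\nabla_g d_1}{d_1}+\frac{\nabla_g d_2}{d_2}$ produces
\begin{equation*}
\abs{v}^2=\frac{1}{d_1^2}+\frac{2G_{12}}{d_1d_2}+\frac{1}{d_2^2}.
\end{equation*}

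For $n=2$ the first term of \eqref{ec:defV} carries no sum and passes over directly with constant $C_1(2,p)$. For the fourth (Hessian) group the constraint $1\le i<j\le 2$ forces $i=1,\ j=2$, leaving only $k\in\{1,2\}$; the two cross terms $\hess^{d_1}(\nabla_g d_1,\nabla_g d_2)$ and $\hess^{d_2}(\nabla_g d_2,\nabla_g d_1)$ vanish by the orthogonality relation above, so exactly $\frac{\hess^{d_1}(\nabla_g d_2,\nabla_g d_2)}{d_1d_2^2}+\frac{\hess^{d_2}(\nabla_g d_1,\nabla_g d_1)}{d_1^2d_2}$ survives, as claimed.

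The only genuine bookkeeping, and the step where I expect care to be needed, is reconciling the second and third groups, which are weighted differently: the Laplacian group carries $\abs{v}^{p-2}$ and the constant $N-p+1$, whereas the target expression carries $N-1$ together with an $\abs{v}^{p-4}$ term. I would first split off the discrepancy via $N-p+1=(N-1)-(p-2)$, giving
\begin{equation*}
\sum_{i=1}^2\frac{d_i\Delta_g d_i-(N-p+1)}{d_i^2}\abs{v}^{p-2}
=\sum_{i=1}^2\frac{d_i\Delta_g d_i-(N-1)}{d_i^2}\abs{v}^{p-2}+(p-2)\Big(\frac{1}{d_1^2}+\frac{1}{d_2^2}\Big)\abs{v}^{p-2}.
\end{equation*}
Then I would simplify the triple sum by factoring over the inner indices: for each fixed $i$ one has $\sum_{j,k}\frac{G_{ij}G_{ik}}{d_i^2d_jd_k}=\frac{1}{d_i^2}\big(g(\nabla_g d_i,v)\big)^2$, so that using $G_{ii}=1$ the full sum collapses to $\big(\tfrac{1}{d_1^2}+\tfrac{G_{12}}{d_1d_2}\big)^2+\big(\tfrac{G_{12}}{d_1d_2}+\tfrac{1}{d_2^2}\big)^2$.

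Finally I would convert the leftover $\abs{v}^{p-2}$ piece to the common weight through $\abs{v}^{p-2}=\abs{v}^2\,\abs{v}^{p-4}$ and the identity for $\abs{v}^2$ above, and add it to the $-(p-2)\abs{v}^{p-4}$ triple-sum contribution. All monomials $d_1^{-4}$, $G_{12}d_1^{-3}d_2^{-1}$, $G_{12}d_1^{-1}d_2^{-3}$, $d_2^{-4}$ cancel, and what remains is $\tfrac{2}{d_1^2d_2^2}-\tfrac{2G_{12}^2}{d_1^2d_2^2}$, i.e. precisely $2(p-2)\frac{1-G_{12}^2}{d_1^2d_2^2}\abs{v}^{p-4}$ once the common factor $C_2(2,p)$ is restored. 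Collecting the four groups then yields the asserted formula; the consistent use of $\abs{v}^{p-2}=\abs{v}^2\abs{v}^{p-4}$ is the one nonroutine ingredient that makes the two differently weighted groups merge, everything else being straightforward expansion.
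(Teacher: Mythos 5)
Your proposal is correct and follows essentially the same computation as the paper: both reduce the triple sum to $\sum_i d_i^{-2}\,g(\nabla_g d_i,v)^2$, kill the mixed Hessian terms via $\hess^{d_i}(\cdot,\nabla_g d_i)=0$, and trade a $(p-2)\big(d_1^{-2}+d_2^{-2}\big)$ piece between the $\abs{v}^{p-2}$ and $\abs{v}^{p-4}$ groups to turn $N-p+1$ into $N-1$ and produce the $2(p-2)\frac{1-G_{12}^2}{d_1^2d_2^2}$ remainder. The only difference is the order of the bookkeeping (you split $N-p+1=(N-1)-(p-2)$ first and push the leftover down to the $\abs{v}^{p-4}$ weight, whereas the paper collapses the triple sum first and pushes an $\abs{v}^2$ factor up), which is immaterial.
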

We define the functional space $\mathcal{D}^{1,p}(M)$ as
\begin{equation}
    \mathcal{D}^{1,p}(M):=\Big\{u\in\mathcal{D}'(M) \;\Big|\; \int_M \abs{\nabla_g u}^p \dvol < \infty \Big\}.
\end{equation}
When restricting to the bipolar case in Theorem \ref{ec:thm1}, using Proposition \ref{ec:prop1} we obtain the following corollary.
\begin{corollary}   \label{ec:cor1}
Let $(M^N,g)$ be a complete Riemannian manifold of dimension $N\geq 3$ and $a_1$, $a_2$ points in $M$. Then the following inequality holds for any $1<p<N$ and any $u\in C^\infty_c(M)$:
    \begin{align}   \label{ec:ineq-bipolar}
        \int_M \abs{\nabla_g u}^p \dvol & \geq\int_M V_{p,a_1,a_2} \abs{u}^p \dvol.
    \end{align}
    Moreover, if $M$ is a Cartan-Hadamard manifold, for any $2\leq p<N$ the right-hand side of \eqref{ec:ineq-bipolar} is positive and the constant $1$ is sharp in \eqref{ec:ineq-bipolar}. This constant is achieved in the energy space $\mathcal{D}^{1,p}(M)$ by the minimizers
    \begin{equation}   \label{ec:defphi-thm}
        \phi(x)= C \; d_1^\frac{p-N}{2(p-1)}d_2^\frac{p-N}{2(p-1)}, \;\; C\in\R,
    \end{equation}
    unless the case $p=2$ when the constant is not achieved.
\end{corollary}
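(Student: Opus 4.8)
The plan is to get \eqref{ec:ineq-bipolar} for free and then handle positivity and sharpness separately. First I would derive the inequality by specializing Theorem \ref{ec:thm1} to $n=2$ and replacing $V_{p,a_1,a_2}$ by the explicit expression from Proposition \ref{ec:prop1}; no further work is needed there. For the positivity on a Cartan--Hadamard manifold when $2\le p<N$, the idea is to verify that every summand in the Proposition \ref{ec:prop1} form of $V_{p,a_1,a_2}$ is nonnegative. Since $N>p$, both constants $C_1(2,p)$ and $C_2(2,p)$ are positive, and because $p-2\ge 0$ the powers $\abs{v}^{p-2}$ and $\abs{v}^{p-4}$ are nonnegative wherever defined. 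The leading term is a nonnegative multiple of a squared norm; the term $\sum_i\frac{d_i\Delta_g d_i-(N-1)}{d_i^2}\abs{v}^{p-2}$ is nonnegative by Remark \ref{ec:rmk1}; the term carrying $1-G_{12}^2$ is nonnegative because the Eikonal equation \eqref{eikonal} gives $\abs{\nabla_g d_i}=1$, whence Cauchy--Schwarz forces $\abs{G_{12}}\le 1$; and the Hessian term is nonnegative by Remark \ref{ec:rmk1}, since both $\hess^{d_1}(\nabla_g d_2,\nabla_g d_2)$ and $\hess^{d_2}(\nabla_g d_1,\nabla_g d_1)$ are of the form $\hess^{d}(X,X)$. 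The restriction $p\ge 2$ is exactly what keeps the factors $(p-2)$ from reversing these signs. Summing gives $V_{p,a_1,a_2}\ge 0$.

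The sharpness I would extract from the supersolution behind Theorem \ref{ec:thm1}. Writing $\alpha=\frac{p-N}{2(p-1)}$ and $\phi=C\,d_1^\alpha d_2^\alpha$ as in \eqref{ec:defphi-thm}, the relation $\nabla_g\log\phi=\alpha v$ gives $\nabla_g\phi=\alpha\phi v$ and $\abs{\nabla_g\phi}=\abs{\alpha}\phi\abs{v}$. The computation underlying Theorem \ref{ec:thm1} is precisely that $\phi$ solves the Euler--Lagrange equation $-\Delta_p\phi=V_{p,a_1,a_2}\phi^{p-1}$ and that the inequality stems from a Picone-type identity whose equality case is $u$ proportional to $\phi$. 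Integrating by parts, once legitimate, yields $\int_M\abs{\nabla_g\phi}^p\dvol=\int_M V_{p,a_1,a_2}\phi^p\dvol$, so the Rayleigh quotient at $\phi$ equals $1$ and the best constant is forced to be $1$.

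Whether $\phi$ is an admissible minimizer then reduces to $\phi\in\mathcal{D}^{1,p}(M)$, i.e.\ to $\int_M\abs{\nabla_g\phi}^p\dvol<\infty$. Near a pole $a_i$ one has $\phi\sim d_i^\alpha$ and $\abs{\nabla_g\phi}\sim d_i^{\alpha-1}$, while the radial volume element contributes a factor $d_i^{N-1}$, so the local contribution is finite iff $(\alpha-1)p+N>0$, i.e.\ $\alpha>\frac{p-N}{p}$; substituting $\alpha=\frac{p-N}{2(p-1)}$ and using $p-N<0$, this is equivalent to $p>2$. Hence for $2<p<N$ the function $\phi$ lies in the energy space near the singularities and, after the integrability at infinity is checked, attains the constant; for $p=2$ the exponent sits exactly at the borderline $\alpha=\frac{2-N}{2}=\frac{p-N}{p}$, the integral diverges logarithmically, $\phi\notin\mathcal{D}^{1,2}(M)$, and the constant is not attained. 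In that borderline case I would recover sharpness by truncating $\phi$ with logarithmic cutoffs near $a_1,a_2$ to build a minimizing sequence in $C^\infty_c(M)$ whose Rayleigh quotient tends to $1$.

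The hard part will be the two justification steps hidden above: establishing the distributional identity $-\Delta_p\phi=V_{p,a_1,a_2}\phi^{p-1}$ across the singular set $\{a_1,a_2\}$, so that no concentrated masses spoil the integration by parts, and legitimately passing from $C^\infty_c(M)$ to $\mathcal{D}^{1,p}(M)$ by density so that $\phi$ (or the truncated sequence) is genuinely admissible. Controlling the resulting error terms — near the poles for the cutoff argument, and the volume growth at infinity on a general noncompact Cartan--Hadamard manifold — is where the real estimates lie; the remaining content is the sign bookkeeping and the elementary integrability computation already sketched.
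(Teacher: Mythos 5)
Your proposal follows essentially the same route as the paper: the inequality is read off from Theorem \ref{ec:thm1} and Proposition \ref{ec:prop1}, positivity comes from termwise sign checks via the Laplacian and Hessian comparison principles together with Cauchy--Schwarz, and sharpness is obtained by verifying $\phi\in\mathcal{D}^{1,p}(M)$ (with the same exponent arithmetic producing the $p>2$ versus $p=2$ dichotomy near the poles) and then integrating by parts to show the Rayleigh quotient of $\phi$ equals $1$. The one step you defer, integrability of $|\nabla_g \phi|^p$ at infinity, is handled in the paper by the two-sided bound $d(x,a)/2\le d_i(x)\le 2\,d(x,a)$ away from the poles and a co-area estimate giving finiteness for all $p>1$.
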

Notice that Corollary \ref{ec:cor1} gives a dichotomy between the case $2<p<N$ and $p=2$, since in \cite{kristaly1} the sharp constant in the inequality is not achieved.  \\

Consequently, we obtain another sharp result for Cartan-Hadamard manifolds.
\begin{corollary}   \label{ec:cor2}
If $M$ is a $N$-dimensional Cartan-Hadamard manifold with sectional curvatures bounded above by a negative constant $c=-R^2$, $R>0$, then for any $2\leq p<N$ and $u\in C^\infty_c(M)$ it holds
    \begin{align}     \label{ec:ineq-CH}
        \int_M \abs{\nabla_g u}^p \dvol & \geq C_1(2,p) \int_M \widetilde{V}_{p,a_1,a_2} \abs{u}^p \dvol + (N-1)C_2(2,p) \sum_{i=1}^2 \int_M \frac{3R^2}{\pi^2+R^2d_i^2} \abs{v}^{p-2}\abs{u}^p \dvol   \notag\\
        & + (p-2)C_2(2,p)\sum_{i=1}^2 \int_M \frac{1-G_{12}^2}{d_1^2d_2^2} \left[ 
2+\frac{3d_i^2R^2}{\pi^2+R^2d_i^2} \right] \abs{v}^{p-4} \abs{u}^p \dvol.
    \end{align}
In particular, 
\begin{equation}   \label{ec:ineq-CH2}
     \int_M \abs{\nabla_g u}^p \dvol  \geq C_1(2,p) \int_M \widetilde{V}_{p,a_1,a_2} \abs{u}^p \dvol,
\end{equation}
where the constant $C_1(2,p)=(p-1)\left(\frac{N-p}{2(p-1)}\right)^p$ is sharp and not attained in the energy space $\mathcal{D}^{1,p}(M)$.
\end{corollary}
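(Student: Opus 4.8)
The plan is to derive \eqref{ec:ineq-CH} directly from the bipolar estimate of Corollary \ref{ec:cor1}, written in the explicit form of Proposition \ref{ec:prop1}, by bounding from below the two curvature-dependent blocks of $V_{p,a_1,a_2}$ — the Laplacian excess $\sum_{i=1}^2\frac{d_i\Delta_g d_i-(N-1)}{d_i^2}|v|^{p-2}$ and the two Hessian terms — using the comparison theorems specialised to $c=-R^2$. The leading summand $C_1(2,p)\big|\frac{\nabla_g d_1}{d_1}-\frac{\nabla_g d_2}{d_2}\big|^2|v|^{p-2}$ is exactly $C_1(2,p)\widetilde V_{p,a_1,a_2}$ and is carried over unchanged; everything else is estimated termwise and integrated against $|u|^p$.

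For the Laplacian block I would apply Theorem \ref{ec:thmLapl} with $c=-R^2$, where $\frac{s_c'(r)}{s_c(r)}=R\coth(Rr)$, to obtain $\Delta_g d_i\geq (N-1)R\coth(Rd_i)$ and hence $d_i\Delta_g d_i-(N-1)\geq (N-1)\big(Rd_i\coth(Rd_i)-1\big)$. The passage to the explicit weight rests on the one-variable inequality
\[
t\coth t-1\geq \frac{3t^2}{\pi^2+t^2},\qquad t\geq 0,
\]
applied with $t=Rd_i$; I would verify it by studying $f(t)=(t\coth t-1)(\pi^2+t^2)-3t^2$, the only nontrivial check being the sign near $t=0$, which is governed by the Taylor coefficient $\tfrac{\pi^2}{3}-3>0$ (this is precisely where $\pi^2>9$ enters and explains the constant $\pi^2$). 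Dividing by $d_i^2$ gives $\frac{d_i\Delta_g d_i-(N-1)}{d_i^2}\geq (N-1)\frac{3R^2}{\pi^2+R^2d_i^2}$, i.e. the Laplacian remainder of \eqref{ec:ineq-CH}.

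For the Hessian block, which carries the factor $(p-2)\geq 0$ because $p\geq 2$, I would use Theorem \ref{ec:thmHess}, $\hess^{d_i}\geq R\coth(Rd_i)\,\pi_{d_i}$, evaluated on $\nabla_g d_j$: by \eqref{ec:proj2}, the Eikonal equation \eqref{eikonal} and \eqref{ec:proj1}, $g(\pi_{d_i}\nabla_g d_j,\nabla_g d_j)=|\pi_{d_i}\nabla_g d_j|^2=1-G_{12}^2$, so that $\hess^{d_i}(\nabla_g d_j,\nabla_g d_j)\geq R\coth(Rd_i)(1-G_{12}^2)$. After dividing by the appropriate powers of $d_1,d_2$ and invoking the same elementary inequality in the form $Rd_i\coth(Rd_i)\geq 1+\frac{3R^2d_i^2}{\pi^2+R^2d_i^2}$, the two Hessian terms plus the explicit term $2(p-2)\frac{1-G_{12}^2}{d_1^2d_2^2}|v|^{p-4}$ of Proposition \ref{ec:prop1} reassemble exactly the last bracket $\sum_{i=1}^2\frac{1-G_{12}^2}{d_1^2d_2^2}\big[2+\frac{3d_i^2R^2}{\pi^2+R^2d_i^2}\big]|v|^{p-4}$ of \eqref{ec:ineq-CH} (the two constants $2$ coming from the explicit term and from the two unit contributions of $Rd_i\coth(Rd_i)$). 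It is essential here that $p\geq 2$: for $1<p<2$ the factor $(p-2)$ is negative and the comparison lower bounds would point the wrong way. Multiplying by $C_2(2,p)>0$ and integrating yields \eqref{ec:ineq-CH}; inequality \eqref{ec:ineq-CH2} then follows at once by discarding the three remainder integrals, which are all non-negative since $C_2(2,p)>0$, $\frac{3R^2}{\pi^2+R^2d_i^2}>0$, $p-2\geq 0$, and $1-G_{12}^2\geq 0$ by Cauchy--Schwarz together with \eqref{eikonal}.

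Non-attainment of $C_1(2,p)$ in \eqref{ec:ineq-CH2} I would obtain cleanly from the chain $\int_M|\nabla_g u|^p\dvol\geq \int_M V_{p,a_1,a_2}|u|^p\dvol\geq C_1(2,p)\int_M\widetilde V_{p,a_1,a_2}|u|^p\dvol$: if equality held for some $u_0\in\mathcal D^{1,p}(M)$, then $\int_M\big(V_{p,a_1,a_2}-C_1(2,p)\widetilde V_{p,a_1,a_2}\big)|u_0|^p\dvol=0$, and since the integrand is strictly positive almost everywhere (the Laplacian remainder alone is positive because $c=-R^2<0$) this forces $u_0\equiv 0$. The genuinely hard point, and the step I expect to demand the most care, is \emph{sharpness}, namely that $\inf_u \frac{\int_M|\nabla_g u|^p\dvol}{\int_M\widetilde V_{p,a_1,a_2}|u|^p\dvol}=C_1(2,p)$, for which one must exhibit a minimizing sequence realising $C_1(2,p)$ in the limit. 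The obstacle is that one cannot do this by concentrating near a single pole: there $\widetilde V_{p,a_1,a_2}\sim d_i^{-p}$ and the quotient localises to the unipolar Hardy constant $\big(\frac{N-p}{p}\big)^p$, which for $p>2$ is strictly larger than $C_1(2,p)$. The extremal sequence must therefore remain genuinely bipolar; I would build it from the profile $\phi$ of \eqref{ec:defphi-thm} — which saturates Corollary \ref{ec:cor1} against the full $V_{p,a_1,a_2}$ — by truncating it and letting the remainder integrals (each of order $d_i^{2-p}$ near the poles, hence lower order than $\widetilde V_{p,a_1,a_2}$, and curvature-controlled at infinity) become negligible relative to $\int_M\widetilde V_{p,a_1,a_2}|u_k|^p\dvol$ while $\frac{\int_M|\nabla_g u_k|^p}{\int_M V_{p,a_1,a_2}|u_k|^p}\to 1$; an alternative is to transplant into a normal-coordinate chart the flat minimizing sequence of \cite{cazacu-rugina}, where $C_1(2,p)$ is known to be sharp for $\widetilde V_{p,a_1,a_2}|_{\R^N}$, and to control the metric distortion. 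Making either limiting argument rigorous — above all ensuring that the positive remainder does not inflate the Rayleigh quotient in the limit — is the crux.
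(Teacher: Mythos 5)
Your derivation of \eqref{ec:ineq-CH} is exactly the paper's: Laplacian and Hessian comparison with $c=-R^2$, the elementary bound $t\coth t-1\geq \frac{3t^2}{\pi^2+t^2}$ applied at $t=Rd_i$ (the paper imports it from \cite[Theorem 1.4]{kristaly3} rather than reproving it), and the identity $g(\pi_{d_i}\nabla_g d_j,\nabla_g d_j)=1-G_{12}^2$; your bookkeeping of the two ``$+1$'' contributions together with the explicit term $2(p-2)\frac{1-G_{12}^2}{d_1^2d_2^2}\abs{v}^{p-4}$ of Proposition \ref{ec:prop1} reassembles the bracket $\bigl[2+\frac{3d_i^2R^2}{\pi^2+R^2d_i^2}\bigr]$ correctly. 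Your non-attainment argument (equality in \eqref{ec:ineq-CH2} forces $\int_M(V_{p,a_1,a_2}-C_1(2,p)\widetilde V_{p,a_1,a_2})\abs{u_0}^p\dvol=0$, and the Laplacian remainder is strictly positive off the measure-zero set where $v$ vanishes) is sound and is in fact more explicit than anything in the paper, which asserts non-attainment without argument; the only point you should make precise is the density step extending \eqref{ec:ineq-CH2} from $C^\infty_c(M)$ to $\mathcal{D}^{1,p}(M)$.

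The genuine gap is sharpness, which you explicitly leave as ``the crux'' with two unexecuted strategies; as it stands this half of the corollary is not proved. The paper's route is an explicit minimizing sequence: $u_\eps$ equal to $d_i^{\frac{p-N}{2(p-1)}}$ times a piecewise-logarithmic cutoff on the annuli $A_i[\eps^2,\eps]$ and $A_i[\eps,\sqrt\eps]$ around \emph{both} poles and zero elsewhere; the denominator is split as $J_\eps-2K_\eps$ with $J_\eps=\int_M\abs{v}^{p-2}(d_1^{-2}+d_2^{-2})\abs{u_\eps}^p\dvol$ and $K_\eps$ the cross term, and the claimed asymptotics are $I_\eps-C_1(2,p)J_\eps=\mathcal{O}(1)$, $K_\eps=\mathcal{O}(\sqrt\eps)$, $J_\eps\to\infty$. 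You should be aware, however, that your instinct about where the difficulty lies is well founded: for $p>2$ the exponent $\beta=\frac{p-N}{2(p-1)}$ satisfies $p\beta-p+N=\frac{(p-2)(N-p)}{2(p-1)}>0$, so $\int_{B_r(a_i)}d_i^{-p}\abs{u_\eps}^p\dvol$ \emph{converges} near each pole and the divergence mechanism $J_\eps\to\infty$ that drives the $p=2$ argument of \cite{kristaly1} does not carry over verbatim; the quotient is then not controlled by the near-pole region alone, and neither of your two sketches (truncating the global profile $\phi$ of \eqref{ec:defphi-thm}, or transplanting the flat sequence of \cite{cazacu-rugina}) is carried far enough to replace it. Until you produce a concrete sequence with $\int_M\abs{\nabla_g u_k}^p\dvol\big/\int_M\widetilde V_{p,a_1,a_2}\abs{u_k}^p\dvol\to C_1(2,p)$ and verify the limit, the statement that $C_1(2,p)$ is sharp remains unestablished in your write-up.
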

We note that even though the inequality \eqref{ec:ineq-CH} is weaker than \eqref{ec:ineq-bipolar} due to the lower-bound estimates on the remainder terms, it reveals the importance of the curvature bound $c=-R^2$, which enhances the inequality as the curvature becomes more negative.

\section{Proof of Main results}    \label{proof}
The proof of Theorem \ref{ec:thm1} relies on an adaptation of the method of supersolutions introduced by Allegretto and Huang in \cite{picone} to Riemannian manifolds. The proof follows the same steps as in Theorem 2.1 in \cite{picone}. Here is the adapted version in the setting of Riemannian manifolds:
\begin{prop} \label{prop2}
	Let $N\geq 3$, $1<p<\infty$. If there exists a  function $\phi>0$ in $M$ such that $\phi \in C^\infty_c(M\setminus\bigcup_{i=1}^n \{a_i\}\cup \cut\{a_i\})$ and  
	\begin{equation}\label{ine1}
		-\Delta_{p,g}\phi \geq \mu V \phi^{p-1}, \quad \forall  x\in M\setminus\{a_1,...,a_n\},           
	\end{equation}
	where $V>0$, with $V\in L_{loc}^1(M)$, is a given multi-singular potential with the poles  $a_1, \ldots, a_n$ and $\mu>0$, then
	\begin{equation}
		\int_{M} \abs{\nabla_g u}^p \dvol \geq \mu\int_{M} V\abs{u}^p \dvol, \;\; \forall u\in C_c^\infty(M).     \notag
	\end{equation}
\end{prop}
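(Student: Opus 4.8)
The plan is to adapt the Picone-inequality argument of Allegretto--Huang to the Riemannian setting, exactly as announced before the statement. First I would reduce to the case $u\geq 0$: since $|\nabla_g|u||=|\nabla_g u|$ almost everywhere and $|u|^p=\big||u|\big|^p$, it suffices to prove the inequality for the nonnegative function $|u|$, which I denote again by $u$. The pointwise engine is the $L^p$-Picone inequality, valid at every point where $\phi>0$ and $u\geq 0$ are $C^1$:
\[
|\nabla_g u|^p \geq |\nabla_g\phi|^{p-2}\, g\!\left(\nabla_g\phi,\ \nabla_g\!\left(\frac{u^p}{\phi^{p-1}}\right)\right).
\]
To see this, expand $\nabla_g\!\left(u^p/\phi^{p-1}\right)=p\,u^{p-1}\phi^{1-p}\nabla_g u-(p-1)u^p\phi^{-p}\nabla_g\phi$; writing $A=\nabla_g u$ and $B=\tfrac{u}{\phi}\nabla_g\phi$, the difference of the two sides equals $|A|^p-p\,|B|^{p-2}g(B,A)+(p-1)|B|^p\geq 0$, which is precisely the subgradient (convexity) inequality for $\xi\mapsto|\xi|^p$. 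The same algebra as in the Euclidean case goes through verbatim, since only the metric inner product $g$ and its induced norm enter.

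Second, I would integrate this bound over $M$ and convert the right-hand side using the weak definition of the $p$-Laplacian. Applying \eqref{ec:dvgformula} with $f=u^p/\phi^{p-1}$ and $X=|\nabla_g\phi|^{p-2}\nabla_g\phi$, and recalling $\dvg(X)=\Delta_{p,g}\phi$, one obtains
\[
|\nabla_g\phi|^{p-2}\, g\!\left(\nabla_g\phi,\ \nabla_g\!\left(\frac{u^p}{\phi^{p-1}}\right)\right) = \dvg\!\left(\frac{u^p}{\phi^{p-1}}\,|\nabla_g\phi|^{p-2}\nabla_g\phi\right) - \frac{u^p}{\phi^{p-1}}\,\Delta_{p,g}\phi.
\]
Once the divergence term integrates to zero, the right-hand side becomes $\int_M(-\Delta_{p,g}\phi)\,\tfrac{u^p}{\phi^{p-1}}\dvol$. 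Since the supersolution hypothesis \eqref{ine1} holds distributionally, I would test it against the \emph{nonnegative} function $u^p/\phi^{p-1}$ to get $\int_M(-\Delta_{p,g}\phi)\,\tfrac{u^p}{\phi^{p-1}}\dvol\geq \mu\int_M V\,\phi^{p-1}\tfrac{u^p}{\phi^{p-1}}\dvol=\mu\int_M V\,u^p\dvol$, which chains with the Picone bound to yield the claim.

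Third -- and this is where the main obstacle lies -- I would justify the integration by parts rigorously despite the fact that $\phi$ is singular at the poles $a_i$ and on the cut loci $\cut\{a_i\}$, where $\Delta_{p,g}\phi$ is only a distribution. Since $u\in C_c^\infty(M)$ need not avoid these sets, the quotient $u^p/\phi^{p-1}$ is not a globally admissible test function, so I would localize: introduce cutoffs $\theta_\eps\in C_c^\infty(M)$ equal to $1$ outside an $\eps$-neighbourhood $U_\eps$ of $\bigcup_i\big(\{a_i\}\cup\cut\{a_i\}\big)$ and vanishing near that singular set, with $|\nabla_g\theta_\eps|$ suitably controlled. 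Applying the pointwise Picone inequality with the admissible test function $\theta_\eps\,u^p/\phi^{p-1}$ -- compactly supported in the region where $\phi$ is smooth and strictly positive -- the divergence term vanishes by the divergence theorem, and I would then let $\eps\to 0$. The delicate point is showing that the remainders generated by $\nabla_g\theta_\eps$ are negligible; this follows from the singular set having measure zero together with $V\in L^1_{loc}(M)$ and the local integrability of $|\nabla_g\phi|^{p-1}u^p\phi^{1-p}$ near the poles, which lets the localized identity converge to the global one. Once these cutoff errors are controlled uniformly, the displayed chain of inequalities yields $\int_M|\nabla_g u|^p\dvol\geq\mu\int_M V\,u^p\dvol$ for every $u\in C_c^\infty(M)$, as required.
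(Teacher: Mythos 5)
Your proposal is correct and follows essentially the same route as the paper, which does not spell out a proof but simply invokes the supersolution method of Allegretto and Huang (Theorem 2.1 of \cite{picone}): the pointwise $L^p$-Picone inequality, integration by parts against $u^p/\phi^{p-1}$, the distributional supersolution property, and a cutoff argument near the poles and cut loci (which have measure zero). Your reconstruction, including the convexity verification of the Picone inequality and the localization step, is exactly the adaptation the authors have in mind.
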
   

\begin{proof} [\textbf{Proof of Theorem \ref{ec:thm1}.}]
We want to find a function $\phi>0$, a constant $\mu$ depending on $N$, $n$ and $p$ and a potential $V\in L^1_{loc}(M)$, with singularities in the points $a_1$, $a_2$,..., $a_n$, which satisfy the identity 
\begin{equation}
    -\frac{\Delta_{p,g}\phi}{\phi^{p-1}} = \mu V, \;\; a.e. \text{for}\; x\in M\setminus\{a_1,...,a_n\}.        \notag
\end{equation}
We consider the functions
\begin{equation} 
\phi_i=d_i^\be, \quad\quad i=1,...,n   \notag
\end{equation}
where $\be$ is negative, aimed to depend on $N$, $n$ and $p$, that  will be precised later. We introduce 
\begin{equation}    \label{ec:defphi}
   \phi= \prod_{i=1}^n \phi_i = \prod_{i=1}^n d_i^\be.  
\end{equation}
We compute the $p$-Laplacian of $\phi$ in \eqref{ec:defphi} in several steps. First, we note that
\begin{equation}   \label{ec:gradphi}
    \nabla_g\phi= \phi\sum_{i=1}^n \frac{\nabla_g \phi_i}{\phi_i} = \be \phi \sum_{i=1}^n \frac{\nabla_g d_i}{d_i}.   
\end{equation}
To simplify the notation, denote by
\begin{equation}   
     v:= \sum_{i=1}^n \frac{\nabla_g d_i}{d_i}. \label{ec:defvector}
\end{equation}
Using the definition of the $p$-Laplacian operator with respect to the metric $g$ and identity \eqref{ec:dvgformula}, we obtain
\begin{align}
    \Delta_{p,g} \phi & = \dvg_g\Big( \abs{\nabla_g\phi}^{p-2}\nabla_g\phi \Big) \notag \\
    & = \be\abs{\be}^{p-2} \dvg_g\Big(\phi^{p-1} \abs{v}^{p-2}v  \Big) \notag \\
    & = \be\abs{\be}^{p-2} \Bigg[ g\left(\nabla_g\Big(\phi^{p-1}\Big)\abs{v}^{p-2} , v\right) + \phi^{p-1}\abs{v}^{p-2} \dvg_g(v) + \phi^{p-1}g\left(\nabla_g\Big(\abs{v}^{p-2}\Big),v\right) \Bigg]  \notag\\
    & = \be\abs{\be}^{p-2}\phi^{p-1} \Bigg[ (p-1)\be \abs{v}^p + \abs{v}^{p-2} \dvg_g(v) + g\left(\nabla_g\Big(\abs{v}^{p-2}\Big),v\right) \Bigg].  \notag
\end{align}
Hence, we denote
\begin{equation}  
     -\frac{\Delta_{p,g}\phi}{\phi^{p-1}} =: V,    \notag
\end{equation}
where 
\begin{equation}
    V = -\be\abs{\be}^{p-2} \Bigg[ (p-1)\be\abs{v}^p + \abs{v}^{p-2}\dvg_g(v) + g\left(\nabla_g\left(\abs{v}^{p-2}\right),v\right) \Bigg]. \;    \label{ec:potential}
\end{equation}
Next, we compute explicitly the three terms in (\ref{ec:potential}). Taking the modulus of $v$, we get
\begin{align}
     \abs{v}^2 & = \sum_{i=1}^n \frac{1}{d_i^2} + 2\sum_{1\leq i<j\leq n} \frac{g(\nabla_g d_i, \nabla_gd_j)}{d_id_j},  \notag\\
     & = -\sum_{1\leq i<j\leq n} \abs{\frac{\nabla_g d_i}{d_i} - \frac{\nabla_g d_j}{d_j}}^2 + n\sum_{i=1}^n \frac{1}{d_i^2}.   \label{ec:modulv^2}
\end{align}
where we are taking into account that, for every $i=1,...,n$, we have $g(\nabla_g d_i,\nabla_g d_i) = \abs{\nabla_g d_i}^2 = 1$, since $d_i$ satisfies the Eikonal equation.
The second term in \eqref{ec:potential} yields to
\begin{equation}
    \dvg_g(v) = \sum_{i=1}^n \frac{d_i\Delta_g d_i -1}{d_i^2}.   \label{ec:termen2}
\end{equation}
The third term in \eqref{ec:potential} will become
\begin{align}
    g & \left( \nabla_g\left(\abs{v}^{p-2}\right),v\right) = \frac{p-2}{2} \abs{v}^{p-4} g\left(\nabla_g\abs{v}^2,v\right)   \notag\\ 
    & =  \frac{p-2}{2} \abs{v}^{p-4} \sum_{i,j,k=1}^n  g\left( \nabla_g \Bigg( \frac{g(\nabla_g d_i, \nabla_gd_j)}{d_id_j} \Bigg) , \frac{\nabla_g d_k}{d_k} \right)   \notag\\
    & = \frac{p-2}{2} \abs{v}^{p-4} \sum_{i,j,k=1}^n  g\left( -\frac{\nabla_g(d_id_j)G_{ij}}{d_i^2d_j^2} + \frac{\nabla_g(G_{ij})}{d_id_j} , \frac{\nabla_g d_k}{d_k} \right)   \notag\\
    & = \frac{p-2}{2} \abs{v}^{p-4} \sum_{i,j,k=1}^n \Bigg[ -\frac{G_{ij}}{d_i^2d_j^2d_k} g\left( d_j\nabla_g d_i + d_i \nabla_g d_j , \nabla_g d_k \right) + \frac{1}{d_id_jd_k} g\left(\nabla_g(G_{ij}), \nabla_g d_k \right) \Bigg]  \notag\\
    & = \frac{p-2}{2} \abs{v}^{p-4} \sum_{i,j,k=1}^n \Bigg[ -\frac{G_{ij}G_{ik}}{d_i^2d_jd_k}  - \frac{G_{ij}G_{jk}}{d_id_j^2d_k}   + \frac{1}{d_id_jd_k} g\left(\nabla_g(G_{ij}), \nabla_g d_k \right) \Bigg]  \notag\\
    & = -(p-2) \abs{v}^{p-4} \sum_{i,j,k=1}^n \frac{G_{ij}G_{ik}}{d_i^2d_jd_k} + (p-2) \abs{v}^{p-4} \sum_{\substack{1\leq k \leq n,\\ 1\leq i<j\leq n}} \frac{g\left(\nabla_g(G_{ij}), \nabla_g d_k \right)}{d_id_jd_k}.  \label{ec:termen3}
\end{align}
Note that the last term can be written in the following way, using the definition of the gradient and the Hessian operator
\begin{align}
    g\left(\nabla_g(G_{ij}), \nabla_g d_k \right) & = g\left( \nabla_g \Big( g(\nabla_g d_i,\nabla_g d_j) \Big) , \nabla_g d_k \right)   \notag\\
    & = \nabla_g d_k \left(g(\nabla_g d_i,\nabla_g d_j)\right)  \notag\\
    & = g\left( \nabla_{\nabla_g d_k}\nabla_g d_i, \nabla_g d_j \right) + g\left( \nabla_{\nabla_g d_k}\nabla_g d_j, \nabla_g d_i \right)  \notag\\
    & = \hess^{d_i} (\nabla_g d_k,\nabla_g d_j) + \hess^ {d_j} ( \nabla_g d_k,\nabla_g d_i ).   \label{ec:termen3'}
\end{align}
Returning to $V$ in \eqref{ec:potential} and using \eqref{ec:termen2}, \eqref{ec:termen3} and \eqref{ec:termen3'}, the potential reduces to
\begin{align}  \label{ec:potential2}
    V  &= -\be\abs{\be}^{p-2} \abs{v}^{p-2} \Bigg[ (p-1)\be \left( -\sum_{1\leq i<j\leq n} \abs{\frac{\nabla_g d_i}{d_i} - \frac{\nabla_g d_j}{d_j}}^2 + n\sum_{i=1}^n \frac{1}{d_i^2} \right) + \sum_{i=1}^n \frac{d_i\Delta_g d_i - 1}{d_i^2}  \Bigg] \notag\\
    & +(p-2)\be\abs{\be}^{p-2} \abs{v}^{p-4} \Bigg[ -\sum_{i,j,k=1}^n \frac{G_{ij}G_{ik}}{d_i^2d_jd_k} + \sum_{\substack{1\leq k \leq n,\\ 1\leq i<j\leq n}} \frac{\hess^{d_i} (\nabla_g d_k,\nabla_g d_j) + \hess^ {d_j} ( \nabla_g d_k,\nabla_g d_i )}{d_id_jd_k} \Bigg].  
\end{align}
Choosing $\be=\frac{p-N}{n(p-1)}$, we obtain
\begin{align}   
    V  &= \left( \frac{N-p}{n(p-1)} \right)^{p-1} \abs{v}^{p-2} \Bigg[ \left(\frac{N-p}{n}\right) \sum_{1\leq i<j\leq n} \abs{\frac{\nabla_g d_i}{d_i} - \frac{\nabla_g d_j}{d_j}}^2 + \sum_{i=1}^n \frac{p-N}{d_i^2} + \sum_{i=1}^n \frac{d_i\Delta_g d_i - 1}{d_i^2}  \Bigg] \notag\\
    & +(p-2)\left( \frac{N-p}{n(p-1)} \right)^{p-1} \abs{v}^{p-4} \Bigg[ -\sum_{i,j,k=1}^n \frac{G_{ij}G_{ik}}{d_i^2d_jd_k} + \sum_{\substack{1\leq k \leq n,\\ 1\leq i<j\leq n}} \frac{\hess^{d_i} (\nabla_g d_k,\nabla_g d_j) + \hess^ {d_j} ( \nabla_g d_k,\nabla_g d_i )}{d_id_jd_k} \Bigg]. \notag 
\end{align}
The final form of $V$ is the following
\begin{align}   \label{ec:potential3}
    V  &= (p-1)\left( \frac{N-p}{n(p-1)} \right)^p \sum_{1\leq i<j\leq n} \abs{\frac{\nabla_g d_i}{d_i} - \frac{\nabla_g d_j}{d_j}}^2 \abs{v}^{p-2} + \left( \frac{N-p}{n(p-1)} \right)^{p-1} \sum_{i=1}^n \frac{d_i\Delta_g d_i - (N-p+1)}{d_i^2} \abs{v}^{p-2} \notag\\
    & +(p-2)\left( \frac{N-p}{n(p-1)} \right)^{p-1} \Bigg[ -\sum_{i,j,k=1}^n \frac{G_{ij}G_{ik}}{d_i^2d_jd_k} +\sum_{\substack{1\leq k \leq n,\\ 1\leq i<j\leq n}} \frac{\hess^{d_i} (\nabla_g d_k,\nabla_g d_j) + \hess^ {d_j} ( \nabla_g d_k,\nabla_g d_i )}{d_id_jd_k} \Bigg] \abs{v}^{p-4}.     
\end{align}
It is clear, according to \eqref{ec:defV} and \eqref{ec:defC}, that
\begin{equation}
    V = V_{p,a_1,...,a_n}.
\end{equation}
By Proposition \ref{prop2}, with $V = V_{p,a_1,...,a_n}$ and constant $\mu=1$, Theorem \ref{ec:thm1} is proved.
\end{proof}

We are now in the position of proving the simplified form of $V_{p,a_1,...,a_n}$ when we restrict only to two poles $a_1$ and $a_2$. 
\begin{proof} [\textbf{Proof of Proposition \ref{ec:prop1}.}]
    We will use the notations of $C_1$ and $C_2$ from \eqref{ec:defC} for $n=2$. The expression of $V$ in \eqref{ec:potential3} becomes
\begin{align}
    V & = C_1(2,p) \abs{\frac{\nabla_g d_1}{d_1} - \frac{\nabla_g d_2}{d_2}}^2 \abs{v}^{p-2} + C_2(2,p) \sum_{i=1}^2 \frac{d_i\Delta_g d_i - (N-p+1)}{d_i^2} \abs{v}^{p-2} \notag\\
    & +(p-2)C_2(2,p) \sum_{k=1}^2 \Bigg[ -\sum_{i,j=1}^2 \frac{G_{ij}G_{ik}}{d_i^2d_jd_k} + \frac{\hess^{d_1} (\nabla_g d_k,\nabla_g d_2) + \hess^ {d_2} (\nabla_g d_k,\nabla_g d_1)}{d_1d_2d_k} \Bigg] \abs{v}^{p-4}. \notag
\end{align}     
We look at the mixed term containing $G_{ij}$ and $G_{ik}$. Recall that $G_{ij}=g(\nabla_g d_i,\nabla_g d_j)$ and $\abs{\nabla_g d_i}=1$ for any $i,j=\overline{1,2}$. Then,
\begin{align}
    \sum_{i,j,k=1}^2 \frac{G_{ij}G_{ik}}{d_i^2d_jd_k} & = \frac{1}{d_1^4} + \frac{1}{d_2^4} + 2\frac{G_{ij}}{d_1^3d_2} + 2\frac{G_{ij}}{d_1d_2^3} + 2\frac{G_{ij}^2}{d_1^2d_2^2}   \notag\\
    & = \frac{1}{d_1^4d_2^4} \left[ d_1^4 + d_2^4 + 2d_1^3d_2 G_{12} + 2d_1d_2^3 G_{12} + 2d_1^2d_2^2 G_{12}^2 \right]  \notag\\
    & = \frac{1}{d_1^4d_2^4} \left[ \left(d_1^2 + d_2^2\right)^2 + 2d_1d_2 G_{12}\left( d_1^2 + d_2^2 \right) -  2d_1^2d_2^2\left( 1 - G_{12}^2\right) \right]  \notag\\
    & = \abs{v}^2 \left( \frac{1}{d_1^2}+\frac{1}{d_2^2} \right) - 2\frac{1-G_{12}^2}{d_1^2d_2^2}.
\end{align}
We also notice that the mixed terms with Hessians will be 0, using the symmetry of the Hessian operator:
\begin{align}
    \hess^{d_1}(\nabla_g d_1, \nabla_g d_2) & = \hess^{d_1}(\nabla_g d_2, \nabla_g d_1) \notag\\
    & = g\left(\nabla_{\nabla_g d_2} \nabla_g d_1,\nabla_g d_1\right)  \notag\\
    & = \nabla_g d_2\left(g(\nabla_g d_1, \nabla_g d_1) \right)  \notag\\
    & = \nabla_g d_2\left(\abs{\nabla_g d_1}^2 \right) = 0.
\end{align}
Similarly, $\hess^{d_2}(\nabla_g d_2, \nabla_g d_1)=0$. Hence, the potential $V$ becomes:
\begin{align}
    V & = C_1(2,p) \abs{\frac{\nabla_g d_1}{d_1} - \frac{\nabla_g d_2}{d_2}}^2 \abs{v}^{p-2} + C_2(2,p) \sum_{i=1}^2 \frac{d_i\Delta_g d_i - (N-p+1)}{d_i^2} \abs{v}^{p-2}   \notag\\
    & +(p-2) C_2(2,p) \Bigg[ - \left( \frac{1}{d_1^2}+\frac{1}{d_2^2} \right) \abs{v}^2 + 2\frac{1-G_{12}^2}{d_1^2d_2^2} + \frac{\hess^{d_1} (\nabla_g d_2,\nabla_g d_2)}{d_1d_2^2} + \frac{\hess^ {d_2} (\nabla_g d_1,\nabla_g d_1)}{d_1^2d_2} \Bigg] \abs{v}^{p-4}  \notag\\
    & = C_1(2,p) \abs{\frac{\nabla_g d_1}{d_1} - \frac{\nabla_g d_2}{d_2}}^2  \abs{v}^{p-2} + C_2(2,p) \sum_{i=1}^2 \frac{d_i\Delta_g d_i - (N-1)}{d_i^2}  \abs{v}^{p-2}    \notag\\
    & + 2(p-2)C_2(2,p) \frac{1-G_{12}^2}{d_1^2d_2^2} \abs{v}^{p-4} + (p-2)C_2(2,p) \left( \frac{\hess^{d_1} (\nabla_g d_2,\nabla_g d_2)}{d_1d_2^2} + \frac{\hess^ {d_2} (\nabla_g d_1,\nabla_g d_1)}{d_1^2d_2} \right) \abs{v}^{p-4}.   \notag
\end{align}
\end{proof}

\begin{proof} [\textbf{Proof of Corollary \ref{ec:cor1}.}]
Inequality \eqref{ec:ineq-bipolar} is a simple consequence of Theorem \ref{ec:thm1} and Proposition \ref{ec:prop1}.   \vspace{0.2cm}\\ 
On Cartan-Hadamard manifolds, the positiveness of the right-hand side of \eqref{ec:ineq-bipolar} for $2\leq p<N$ is deduced by Laplacian comparison principle - Theorem \ref{ec:thmLapl} - and Hessian comparison principle - Theorem \ref{ec:thmHess} - (see also Remark \ref{ec:rmk1}), coupled with the Cauchy-Schwarz inequality. \\
In order to prove that the constant $1$ is sharp in \eqref{ec:ineq-bipolar} and actually attained in $\mathcal{D}^{1,p}(M)$ for $2<p<N$, we prove that
\begin{equation}
    \int_M \abs{\nabla_g \phi}^p \dvol = \int_M V_{p,a_1,a_2} \phi^p \dvol, 
\end{equation}
for $\phi$ defined in \eqref{ec:defphi-thm}. This is done by integration by parts, but first, we need to prove that $\phi\in \mathcal{D}^{1,p}(M)$, i.e. that $\|\nabla_g\phi\|_p$ is finite. Using the definition of $\phi$, for $\be=\frac{p-N}{2(p-1)}$ and using \eqref{ec:gradphi}, we get
\begin{equation}
    \int_M \abs{\nabla_g \phi}^p \dvol = \abs{\be}^p \int_M \phi^p \abs{\sum_{i=1}^n\frac{\nabla_g d_i}{d_i}}^p \dvol.   \notag 
\end{equation}
We have to divide the integral in different regions in $M$. More precisely, let $r>0$ sufficiently small, say $r<\frac{\dist(a_1,a_2)}{4}$, and $R>\dist(a_1,a_2)$. Fix $a$ to be a point with the property that $\dist(a,a_1)=\dist(a,a_2)$ situated on a geodesic which connects $a_1$ and $a_2$. 
We consider the balls $B_{a_i}(r)$ of radius $r$ around the points $a_i$ and the set of points $A:=\{x\in M \;|\; \dist(a,a_i)>R\}$, for any $i=1,2$. Note that from the choice of $r$ and $R$, the set $A$ contains both balls $B_r(a_i)$. Now, the integral above is 
\begin{align}
    \int_M \abs{\nabla_g \phi}^p \dvol & = \int_{M\setminus A} \abs{\nabla_g \phi}^p \dvol + \int_{A\setminus \cup_{i=1}^2 B_r(a_i)} \abs{\nabla_g \phi}^p \dvol + \int_{\cup_{i=1}^2 B_r(a_i)} \abs{\nabla_g \phi}^p \dvol    \notag\\
    & := I_1 + I_2 + I_3.          \label{integrals}
\end{align}
Since the set $A\setminus \cup_{i=1}^2 B_r(a_i)$ is compact, the integral $I_2$ is finite. From now on, by $\simeq$ between two quantities, we understand that we disregard the constants appearing in computations and that the asymptotic behavior of the quantities involved is the same. Now for any $i=1,2$, by the co-area formula, we have
\begin{equation}
    \int_{B_r(a_i)} \abs{\nabla_g \phi}^p \dvol \simeq \int_{B_r(a_i)} d_i^{p(\be-1)}\dvol \simeq \int_0^r t^{p(\be-1)+N-1} dt.   \notag
\end{equation}
This integral is finite if 
\begin{equation}
    p(\be-1)+N >0  \;\;\; \iff \;\;\; \frac{(p-2)(N-p)}{2(p-1)}>0.   \notag
\end{equation}
Hence, the integral $I_3$ is finite for any $2\leq p<N$. For the first integral, we need to make some estimates first. Let us notice, from the properties of the distance function and the point $a$, that for any $x\in M\setminus A$
\begin{equation}
    d(x,a_i) \leq d(x,a) + d(a,a_i) \leq 2d(x,a)    \notag
\end{equation}
and 
\begin{equation}
    d(x,a_i) \geq d(x,a) - d(a,a_i) \geq d(x,a) - \frac{R}{2} = \frac{d(x,a)}{2} + \frac{d(x,a)-R}{2} \geq \frac{d(x,a)}{2}.   \notag
\end{equation}
Hence, we conclude that 
\begin{equation}
    \frac{d(x,a)}{2} \leq d(x,a_i) \leq 2 d(x,a).  \notag
\end{equation}
So we can estimate the integral $I_1$ in terms of the distance from $x$ to $a$, denoted by $d:=d(x,a)$.
\begin{equation}
    \int_{M\setminus A} \abs{\nabla_g \phi}^p \dvol  \simeq \int_{M\setminus A} d^{p(2\be-1)} \dvol  \simeq \int_R^\infty t^{p(2\be-1)+N-1} \dvol.    \notag
\end{equation}
This integral is finite if 
\begin{equation}
    p(2\be-1)+N <0 \;\;\;\iff\;\;\; \frac{p-N}{p-1} <0.   \notag
\end{equation}
Thus, the integral $I_1$ is finite for $p>1$.
Combining the above, we obtain that for $2\leq p<N$ the integrals $I_1$, $I_2$ and $I_3$ are finite, hence $\|\nabla_g \phi\|_p$ is finite. Now, using integration by parts, we get
\begin{equation}
    \int_M V_{p,a_1,a_2} \abs{\phi}^p \dvol = -\int_M \frac{\Delta_p \phi}{\phi^{p-1}} \abs{\phi}^p \dvol = \int_M \abs{\nabla_g \phi}^p \dvol,  \notag
\end{equation}
which concludes the proof.
\end{proof}

Now we will investigate the result on Cartan-Hadamard manifolds, where we are able to prove a sharp inequality.
\begin{proof} [\textbf{Proof of Corollary \ref{ec:cor2}}]
The inequality is derived from Corollary \ref{ec:cor1} and the following estimates. Recall that the sectional curvatures are bounded from above by a constant $c=-R^2$, where $R>0$.\\
First, by the inequality (see \cite[Theorem 1.4]{kristaly3})
\begin{equation}    \label{ec:ineg-curbura}
    t\coth(t) - 1 \geq \frac{3t^2}{\pi^2+t^2}, \;\;\;\forall t>0
\end{equation}
and using \eqref{ec:laplaciancomparison}, we have that
\begin{align}    \label{ec:ineqcor-1}
    d_i\Delta_g d_i - (N-1) & \geq (N-1) \Big[ d_iR \coth(d_i R) -1 \Big]   \notag\\
    & \geq (N-1) \frac{3R^2 d_i^2}{\pi^2 + R^2d_i^2}.    
\end{align}
Next, we estimate the Hessian using \eqref{ec:hessiancomparison} and \eqref{ec:proj2}:
\begin{align}    \label{ec:ineqcor-2}
    \hess^{d_1}(\nabla_g d_2, \nabla_g d_2) & \geq d_1R\coth(d_1R) \;g(\pi_{d_1}\nabla_g d_2, \nabla_g d_2)   \notag\\
    & \geq \Bigg[ 1 + \frac{3R^2 d_i^2}{\pi^2 + R^2d_i^2}\Bigg] g\left(\nabla_g d_2 -\frac{g(\nabla_g d_2, \nabla_g d_1)}{g(\nabla_g d_1,\nabla_g d_1)}\nabla_g d_1 , \nabla_g d_2 \right)    \notag\\
    & = \Bigg[ 1 + \frac{3R^2 d_i^2}{\pi^2 + R^2d_i^2}\Bigg] \left( 1-G_{12}^2 \right). 
\end{align}
Combining \eqref{ec:ineqcor-1} and \eqref{ec:ineqcor-2}, we get the desired \eqref{ec:ineq-CH}.

It remains to prove the sharpness of the constant $C_1(2,p)$ in \eqref{ec:ineq-CH2}. Let $\eps>0$ small enough such that $B_{2\sqrt{\eps}}(a_1)\cap B_{2\sqrt{\eps}}(a_2)=\emptyset$. For $i=1,2$, let
\begin{equation}
    u_\eps(x)=\left\{ \begin{array}{lll}
	\frac{\log\left(\frac{d_i(x)}{\eps^2}\right)}{\log\left(\frac{1}{\eps}\right)}d_i(x)^\frac{p-n}{2(p-1)}, & {\rm if} & x\in A_i[\eps^2,\eps]; \\
	\frac{2\log\left(\frac{\sqrt{\eps}}{d_i(x)}\right)}{\log\left(\frac{1}{\eps}\right)}d_i(x)^\frac{p-n}{2(p-1)}, & {\rm if} & x\in A_i[\eps,\sqrt{\eps}];  \\
	0, & &\hbox{ otherwise,}
	\end{array} \right. .
\end{equation}
Since $u_\eps$ has compact support for any $\eps$, we can use it as a test function in the inequality and, since all the other terms are positive, we get that 
\begin{equation}
    C_1(2,p) \leq \frac{\int_M \abs{\nabla u_\eps}^p \dvol}{\int_M \abs{v}^{p-2}\abs{\frac{\nabla_g d_1}{d_1}-\frac{\nabla_g d_2}{d_2}}^2 \abs{u_\eps}^p \dvol} = \frac{I_\eps}{J_\eps - 2 K_\eps},   \notag
\end{equation}
where we denoted:
\begin{equation}
    I_\eps = \int_M \abs{\nabla_g u_\eps}^p \dvol, \;\;\;
    J_\eps = \int_M \abs{v}^{p-2} \left(\frac{1}{d_1^2}+\frac{1}{d_2^2}\right) \abs{u_\eps}^p \dvol, \;\;\;
    K_\eps = \int_M \abs{v}^{p-2} \frac{g(\nabla_g d_1,\nabla_g d_2)}{d_1d_2} \abs{u_\eps}^p \dvol.  \notag
\end{equation}
Similar to the work in \cite{kristaly1}, we can show by direct computations that
\begin{equation}
    I_\eps - C_1(2,p) J_\eps = \mathcal{O}(1),\;\;\; K_\eps = \mathcal{O}(\sqrt{\eps}) \;\; \text{and}\;\;\lim_{\eps\to 0} J_\eps = +\infty.  \notag
\end{equation}
Finally, we conclude that
\begin{equation}
    C_1(2,p) \leq \frac{J_\eps + \mathcal{O}(1)}{J_\eps + \mathcal{O}(\sqrt{\eps})} \stackrel{\eps\to 0}{\longrightarrow} C_1(2,p),
\end{equation}
and the proof is finished.
\end{proof}

\section{Bipolar Hardy inequalities in constant curvature setting}    \label{constant-curvature}
In this section we want to explore inequality \eqref{ec:ineq-bipolar} in the case of constant sectional curvature and see how can we make particular estimates on the operators to obtain remainders depending only on the curvature and distance functions. Mainly, we try and get rid of the second-order terms $\Delta_g$ and $\hess^d$.

\subsection{Bipolar $L^p-$Hardy inequalities on the hyperbolic space}
For the hyperbolic space, we consider the Poincar\'e ball model, defined as $\Hyp = \{x \in \mathbb{R}^N : |x| < 1\}$, equipped with the Riemannian metric
\begin{equation}
g_{\Hyp}(x) = (g_{ij}(x))_{i,j=\overline{1,N}} = \rho(x)^2 \delta_{ij},
\end{equation}
where $\rho(x) = \frac{2}{1 - |x|^2}$. It is well established that $(\Hyp, g_{\Hyp})$ forms a Cartan-Hadamard manifold with a constant sectional curvature of $-1$. 

The associated volume element is expressed as
\begin{equation}\label{volume-form-hyper}
    dv_{\Hyp}(x) = \rho(x)^N dx,
\end{equation}
while the hyperbolic gradient and Laplace-Beltrami operator are
\begin{equation}   \label{ec:grad-hyper}
\nabla_{\Hyp} u = \frac{\nabla u}{\rho^2}, \quad \text{and} \quad \Delta_{\Hyp} u = \rho^{-N} \dvg(\rho^{n-2} \nabla u),
\end{equation}
where $\nabla$ represents the standard Euclidean gradient in $\mathbb{R}^N$. 
The hyperbolic distance between two points $x,y \in \Hyp$ is given by
\begin{equation}
d_{\Hyp}(x,y) = \arcosh\left(1+2\frac{|x-y|^2}{(1+|x|^2)(1+|y|^2)}\right).
\end{equation}
\begin{prop}   
For any $2\leq p<N$ and $u\in C^\infty_c(\Hyp)$:
    \begin{align}
        \int_{\Hyp} \abs{\nabla_{\Hyp} u}^p dv_{\Hyp} & \geq C_1(2,p) \int_{\Hyp} \widetilde{V}_{p,a_1,a_2} \abs{u}^p dv_{\Hyp} + (N-1)C_2(2,p) \sum_{i=1}^2 \int_{\Hyp} \frac{3}{\pi^2+d_i^2} \abs{v}^{p-2}\abs{u}^p dv_{\Hyp}   \notag\\
        & + (p-2)C_2(2,p)\sum_{i=1}^2 \int_{\Hyp} \frac{1-G_{12}^2}{d_1^2d_2^2} \left[ 2+\frac{3d_i^2}{\pi^2+d_i^2} \right] \abs{v}^{p-4} \abs{u}^p dv_{\Hyp},
    \end{align}
where the constant $C_1(2,p)=(p-1)\left(\frac{N-p}{2(p-1)}\right)^p$ is sharp.
\end{prop}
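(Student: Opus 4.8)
The plan is to recognize this proposition as the direct specialization of Corollary~\ref{ec:cor2} to the hyperbolic space. First I would recall that the Poincar\'e ball model $(\Hyp, g_{\Hyp})$ is a complete, simply connected $N$-dimensional Riemannian manifold with constant sectional curvature equal to $-1$; in particular it is a Cartan-Hadamard manifold whose sectional curvatures are bounded above by $c = -1$. Matching this with the hypothesis $c = -R^2$ of Corollary~\ref{ec:cor2} forces $R = 1$. This identification—the one point that must be made carefully—verifies every hypothesis of that corollary in the present setting, with the generic Riemannian operators replaced by their hyperbolic counterparts: $\nabla_g = \nabla_{\Hyp}$, $\dvol = dv_{\Hyp}$, and $d_i = d_{\Hyp}(\cdot, a_i)$.

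Next I would simply substitute $R = 1$ into inequality~\eqref{ec:ineq-CH}. The curvature-dependent factor $\frac{3R^2}{\pi^2 + R^2 d_i^2}$ collapses to $\frac{3}{\pi^2 + d_i^2}$, and the bracketed factor $2 + \frac{3R^2 d_i^2}{\pi^2 + R^2 d_i^2}$ collapses to $2 + \frac{3 d_i^2}{\pi^2 + d_i^2}$. This reproduces the stated inequality term by term, so no further computation is required for the main estimate. One could alternatively give a self-contained derivation by noting that in constant curvature $-1$ the Laplacian and Hessian comparison theorems (Theorems~\ref{ec:thmLapl} and~\ref{ec:thmHess}) hold with equality, so that $d_i\Delta_{\Hyp}d_i - (N-1) = (N-1)(d_i\coth d_i - 1)$ exactly, and then invoke the elementary bound~\eqref{ec:ineg-curbura} with $t = d_i$; this yields the identical conclusion.

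Finally, the sharpness of the constant $C_1(2,p) = (p-1)\left(\frac{N-p}{2(p-1)}\right)^p$ is inherited directly from the sharpness part of Corollary~\ref{ec:cor2}: inequality~\eqref{ec:ineq-CH2} holds on any Cartan-Hadamard manifold with its leading constant shown to be sharp and not attained, and $\Hyp$ is such a manifold. The truncated logarithmic test functions $u_\eps$ built in the proof of Corollary~\ref{ec:cor2} transfer without modification, since their construction depends only on the distance functions $d_i$ and the annular regions $A_i[\eps^2,\eps]$, $A_i[\eps,\sqrt{\eps}]$, all of which are well defined in $\Hyp$. Because the proposition is a pure specialization, there is no substantive obstacle to overcome; the entire content of the argument is the correct reading of the curvature normalization $c = -R^2 = -1$.
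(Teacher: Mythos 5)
Your proposal is correct and follows exactly the paper's own argument: the paper likewise proves this proposition by invoking Corollary \ref{ec:cor2} with the observation that $\Hyp$ is a Cartan--Hadamard manifold of constant sectional curvature $-1$, i.e.\ $R=1$, so the inequality and the sharpness of $C_1(2,p)$ follow immediately. Your additional remarks on the term-by-term specialization and the transfer of the test functions $u_\eps$ are consistent with, and merely elaborate on, what the paper leaves implicit.
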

\begin{proof}
    By Corollary \ref{ec:cor2}, since the sectional curvatures of the hyperbolic space are all equal to $-1$, we get the inequality along with the sharpness of the constant.
\end{proof}

\subsection{Bipolar $L^p-$Hardy inequalities on the upper-hemisphere}
By $\mathbb{S}^N_+$ we denote the upper-hemisphere of $\mathbb{S}^N$ endowed with the usual Riemannian metric of $\mathbb{S}^N$ inherited by $\R^{N+1}$. If we consider $a_0$ to be the north pole of $\mathbb{S}^N_+$, then we define $\delta:=\max(d(a_0,a_1),d(a_0,a_2))$. We have the following result:
\begin{prop}
For any $2\leq p<N$ and $u\in C^\infty_c(\Sp)$:
    \begin{align}
        \int_{\Sp} \abs{\nabla_g u}^p \dvol & \geq C_1(2,p)\int_{\Sp}  \abs{\frac{\nabla_g d_1}{d_1} - \frac{\nabla_g d_2}{d_2}}^2 \abs{v}^{p-2} \abs{u}^p \dvol + (N-1) C_2(2,p) c(\delta) \int_{\Sp} \abs{v}^{p-2}\abs{u}^p \dvol    \notag\\
        & + (p-2)C_2(2,p) \int_{\Sp} \left( 4 + c(\delta)(d_1^2+d_2^2) \right) \frac{1-G_{12}^2}{d_1^2d_2^2} \abs{v}^{p-4}\abs{u}^p \dvol,
    \end{align}
    where $c(\delta) = \frac{7\pi^2-3(\delta+\frac{\pi}{2})^2}{\pi^2\left((\delta+\frac{\pi}{2})^2 - \pi^2 \right)}$.
\end{prop}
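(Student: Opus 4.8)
The plan is to derive this proposition as a direct specialization of Corollary \ref{ec:cor2} to the upper-hemisphere $\Sp$, exactly as the hyperbolic case was handled. The essential difference is that $\Sp$ has constant sectional curvature $+1$, so it is \emph{not} a Cartan-Hadamard manifold and the comparison theorems \ref{ec:thmHess} and \ref{ec:thmLapl} (which bound curvature from above) point in a direction that is compatible with $c=+1$ but no longer guarantee the sign-definite remainders automatically. Thus the first step I would take is to record the relevant comparison estimates on $\Sp$: since all sectional curvatures equal $1$, Theorem \ref{ec:thmLapl} with $c=1$ gives $\Delta_g d \geq (N-1)\cot(d)$ and Theorem \ref{ec:thmHess} gives $\hess^{d_1}(\nabla_g d_2,\nabla_g d_2)\geq \cot(d_1)\,g(\pi_{d_1}\nabla_g d_2,\nabla_g d_2)=\cot(d_1)(1-G_{12}^2)$, using \eqref{ec:proj2} as in \eqref{ec:ineqcor-2}.

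The crux is then to replace the transcendental factors $d\cot(d)-1$ and $d\cot(d)$ by explicit rational lower bounds in terms of $d$ and the curvature, mirroring the role that \eqref{ec:ineg-curbura} played in the hyperbolic proof. Concretely, I would establish an elementary inequality of the form $t\cot(t)-1\geq c(\delta)\,t^2$ valid on the range of distances that actually occur, and the companion bound $t\cot(t)\geq 1+c(\delta)t^2$, where $c(\delta)$ is the stated constant. The key geometric observation is that on the upper-hemisphere the distance functions $d_i$ are uniformly bounded: any point $x$ in the support of $u$ and each pole $a_i$ lie in $\Sp$, so $d_i(x)\leq \delta+\tfrac{\pi}{2}$, where $\delta=\max_i d(a_0,a_i)$ and $\tfrac{\pi}{2}$ is the diameter bound from the north pole to the equator. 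This uniform upper bound $t\leq \delta+\tfrac{\pi}{2}<\pi$ is exactly what makes $\cot t$ controllable and keeps the denominators in $c(\delta)=\frac{7\pi^2-3(\delta+\frac{\pi}{2})^2}{\pi^2((\delta+\frac{\pi}{2})^2-\pi^2)}$ away from zero.

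With the pointwise lower bounds $d_i\Delta_g d_i-(N-1)\geq (N-1)c(\delta)d_i^2$ and $\hess^{d_i}(\cdots)\geq (1+c(\delta)d_j^2)(1-G_{12}^2)$ in hand, I would substitute them into the bipolar potential of Proposition \ref{ec:prop1} term by term, exactly as \eqref{ec:ineqcor-1} and \eqref{ec:ineqcor-2} were combined to produce \eqref{ec:ineq-CH}. The Laplacian terms contribute the factor $(N-1)C_2(2,p)c(\delta)|v|^{p-2}$, while the two Hessian terms together with the surviving $2\,\frac{1-G_{12}^2}{d_1^2d_2^2}$ piece assemble into the bracket $4+c(\delta)(d_1^2+d_2^2)$ multiplying $\frac{1-G_{12}^2}{d_1^2d_2^2}|v|^{p-4}$. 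Because $C_2(2,p)>0$ and $p\geq 2$, every discarded quantity has the correct sign, so the resulting inequality is genuinely weaker and hence valid.

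The main obstacle I anticipate is proving the scalar inequality $t\cot t-1\geq c(\delta)t^2$ on $(0,\delta+\tfrac{\pi}{2}]$ with precisely the stated $c(\delta)$, since $\cot t$ is negative past $\tfrac{\pi}{2}$ and the estimate must remain sharp enough to be meaningful yet hold uniformly up to $t=\delta+\tfrac{\pi}{2}$. The natural route is to show the auxiliary function $t\mapsto t\cot t-1-c(\delta)t^2$ is monotone or has a controllable sign on the interval, using that $t\cot t$ is decreasing and convexity-type estimates; the specific form of $c(\delta)$ strongly suggests it arises as the value that makes the bound tight at the endpoint $t=\delta+\tfrac{\pi}{2}$, so I would verify the inequality by checking it at that endpoint and confirming the difference does not dip below zero in the interior. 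Once this calibration lemma is in place, the remainder of the argument is the same mechanical substitution as in Corollary \ref{ec:cor2}, and no sharpness claim is made here, so there is no minimizing-sequence computation to carry out.
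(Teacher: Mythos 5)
Your overall strategy is the right one and matches the paper's: on $\Sp$ the constant curvature gives the exact identities $\Delta_g d_i=(N-1)\cot(d_i)$ and $\hess^{d_i}(\nabla_g d_j,\nabla_g d_j)=\cot(d_i)\left(1-G_{12}^2\right)$, the triangle inequality gives $d_i(x)<\delta+\tfrac{\pi}{2}<\pi$, and one substitutes quadratic lower bounds for $t\cot t-1$ into the bipolar potential of Proposition \ref{ec:prop1}. However, there are two problems with your execution. First, a calibration error: your per-pole bound $d_i\Delta_g d_i-(N-1)\geq (N-1)c(\delta)d_i^2$, once summed over $i=1,2$, yields $2(N-1)c(\delta)\int|v|^{p-2}|u|^p$ for the Laplacian remainder, not the claimed $(N-1)c(\delta)\int|v|^{p-2}|u|^p$. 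Since $\delta<\tfrac{\pi}{2}$ forces $(\delta+\tfrac{\pi}{2})^2<\pi^2$ and hence $c(\delta)<0$, the factor $2$ makes your conclusion strictly \emph{weaker} than the stated proposition; the step "the Laplacian terms contribute the factor $(N-1)C_2(2,p)c(\delta)|v|^{p-2}$" does not follow from your pointwise bounds. What is actually needed per pole for that term is $\tfrac{t\cot t-1}{t^2}\geq \tfrac{c(\delta)}{2}$, whereas for the Hessian term the needed per-pole bound is indeed $\tfrac{t\cot t-1}{t^2}\geq c(\delta)$; the two terms require different calibrations and you cannot use a single scalar lemma with the constant $c(\delta)$ for both.

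Second, your proposed route to the calibration lemma rests on a false premise: $c(\delta)$ is \emph{not} the value making $t\cot t-1\geq c(\delta)t^2$ tight at $t_0=\delta+\tfrac{\pi}{2}$. For instance, as $\delta\to 0$ one has $\tfrac{t_0\cot t_0-1}{t_0^2}=-\tfrac{4}{\pi^2}$ while $c(0)=-\tfrac{25}{3\pi^2}$, so the endpoint value sits strictly above $c(\delta)$ and an "endpoint plus monotonicity" verification would not reveal where $c(\delta)$ comes from, nor would it produce the sharper constant $\tfrac{c(\delta)}{2}$ needed for the Laplacian term. The paper instead uses the Mittag--Leffler expansion $\cot t=\tfrac{1}{t}+2t\sum_{k\geq 1}\tfrac{1}{t^2-\pi^2k^2}$, so that $\tfrac{t\cot t-1}{t^2}=2\sum_{k\geq 1}\tfrac{1}{t^2-\pi^2k^2}$; each summand is decreasing on $(0,\pi)$, the $k=1$ term is bounded below using $t<\delta+\tfrac{\pi}{2}$, and the $k\geq 2$ tail is bounded using only $t<\pi$ and telescopes to $-\tfrac{3}{4\pi^2}$. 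This is exactly what generates the expression $c(\delta)=\tfrac{4}{(\delta+\pi/2)^2-\pi^2}-\tfrac{3}{\pi^2}$ with the correct bookkeeping of the sum over the two poles. You should replace your calibration lemma by this expansion (or prove the two distinct scalar inequalities it delivers); the remaining substitution into Proposition \ref{ec:prop1} is then as you describe.
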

\begin{proof}
    The inequality follows from Theorem \ref{ec:thm1} and certain convenient estimates arising from the influence of positive curvature of the sphere $\Sp$, rather than the case of Cartan-Hadamard manifolds. 
    By \cite[Proposition 11.3]{lee} and Theorem\eqref{ec:thmHess}, we have that 
    $$\hess^{d_i}=\frac{s_c'(d_i)}{s_c(d_i)}\pi_{d_i}, \;\;\text{for any }i=1,2.$$
    Explicitly, from \eqref{ec:proj2}, we get that
    $$\hess^{d_i}(\nabla_g d_j,\nabla_g d_j) = \cot(d_i)\left(1-G_{ij}^2\right), \;\;\text{for any } i,j=1,2,\;\; i\neq j.$$
    Consequently, 
    $$\Delta d_i = (N-1)\cot(d_i), \;\; \text{for any }i=1,2.$$
    We will use the Mittag-Leffler expansion of the cotangent function (see, e.g., \cite{ahlfors}), that is 
    \begin{equation}  \label{ec:mitlef}
        \cot(t) = \frac{1}{t} + 2t\sum_{k=1}^\infty \frac{1}{t^2-\pi^2k^2}, \;\; t\in(0,\pi).
    \end{equation}
    Taking into account that $d_i\in (0,\pi)$ in $\Sp$ and that $\delta<\frac{\pi}{2}$, we get that $d_i(x)\leq d(a_0,a_i)+d(a_0,x) < \delta + \frac{\pi}{2}$. Hence, 
    \begin{align}
        \sum_{i=1}^2 \int_{\Sp} & \frac{d_i\Delta d_i - (N-1)}{d_i^2} \abs{v}^{p-2}\abs{u}^p \dvol = (N-1)\sum_{i=1}^2 \int_{\Sp} \frac{d_i\cot(d_i) - 1}{d_i^2} \abs{v}^{p-2}\abs{u}^p \dvol   \notag\\
        & = 2(N-1)\sum_{i=1}^2 \int_{\Sp} \sum_{k=1}^\infty\frac{1}{d_i^2 - \pi^2k^2} \abs{v}^{p-2}\abs{u}^p \dvol    \notag\\
        & \geq 2(N-1) \sum_{i=1}^2 \left(\frac{1}{(\delta+\frac{\pi}{2})^2 - \pi^2} + \frac{1}{\pi^2}\sum_{k=2}^\infty\frac{1}{1 - k^2} \right) \int_{\Sp} \abs{v}^{p-2}\abs{u}^p \dvol    \notag\\
        & = 4(N-1) \left(\frac{1}{(\delta+\frac{\pi}{2})^2 - \pi^2} - \frac{3}{4\pi^2}\right) \int_{\Sp} \abs{v}^{p-2}\abs{u}^p \dvol    \notag\\
        & = (N-1) \frac{7\pi^2-3(\delta+\frac{\pi}{2})^2}{\pi^2\left((\delta+\frac{\pi}{2})^2 - \pi^2 \right)} \int_{\Sp} \abs{v}^{p-2}\abs{u}^p \dvol.    \notag
    \end{align}
    Next, we turn our attention to the Hessian term in the inequality. In the same manner, we get that
    \begin{align}
        \sum_{\substack{i,j=1\\ i\neq j}}^2 \int_{\Sp} & \frac{\hess^{d_i} (\nabla_g d_j,\nabla_g d_j)}{d_id_j^2} \abs{v}^{p-4}\abs{u}^p \dvol = \sum_{i=1}^2 \int_{\Sp} d_i\cot(d_i)\frac{1-G_{12}^2}{d_1^2d_2^2} \abs{v}^{p-4}\abs{u}^p \dvol    \notag\\
        & = \sum_{i=1}^2 \int_{\Sp} \left( 1+2d_i^2\sum_{k=1}^\infty\frac{1}{d_i^2-\pi^2k^2} \right) \frac{1-G_{12}^2}{d_1^2d_2^2} \abs{v}^{p-4}\abs{u}^p \dvol    \notag\\
        & \geq \sum_{i=1}^2 \int_{\Sp} \left( 1+c(\delta)d_i^2 \right) \frac{1-G_{12}^2}{d_1^2d_2^2} \abs{v}^{p-4}\abs{u}^p \dvol    \notag\\
        & = \int_{\Sp} \left( 2+c(\delta)(d_1^2+d_2^2) \right) \frac{1-G_{12}^2}{d_1^2d_2^2} \abs{v}^{p-4}\abs{u}^p \dvol.    \notag
    \end{align}
From \eqref{ec:ineq-bipolar}, 
    \begin{align}
        \int_{\Sp} \abs{\nabla_g u}^p \dvol & \geq C_1(2,p)\int_{\Sp}  \abs{\frac{\nabla_g d_1}{d_1} - \frac{\nabla_g d_2}{d_2}}^2 \abs{v}^{p-2} \abs{u}^p \dvol + (N-1) C_2(2,p) c(\delta) \int_{\Sp} \abs{v}^{p-2}\abs{u}^p \dvol    \notag\\
        & + (p-2)C_2(2,p) \int_{\Sp} \left( 4 + c(\delta)(d_1^2+d_2^2) \right) \frac{1-G_{12}^2}{d_1^2d_2^2} \abs{v}^{p-4}\abs{u}^p \dvol,
    \end{align}
which concludes the proof.    
\end{proof}

\newpage

\end{document}